\newtheorem{thm}{Theorem}[section]
\newtheorem{lemma}[thm]{Lemma}
\newtheorem{prop}[thm]{Proposition}
\newtheorem{cor}[thm]{Corollary}
\theoremstyle{definition}
\newtheorem{df}[thm]{Definition}
\newtheorem{nrmk}[thm]{Remark}
\theoremstyle{remark}
\newtheorem*{rmks}{Remarks}
\theoremstyle{prcl}
\newtheorem*{prclaim}{Proclaim}
\newenvironment{renumerate}
        {
         \begin{enumerate}}
        {\end{enumerate}}
\newenvironment{lenumerate}[2]
        {
         \begin{enumerate}{\setcounter{enumi}{#2}}}
        {\end{enumerate}}
\newcounter{flexnummark}
\renewcommand{\cl}{\mathrm{cl}}
\DeclareMathOperator{\fr}{fr}
\DeclareMathOperator{\bd}{bd}
\newcommand{\rest}[1]{\!\!\upharpoonright_{#1}}
\newcommand{\into}{\longrightarrow}
\newcommand{\set}[1]{\left\{#1\right\}}
\newcommand{\NN}{\mathbb{N}}
\newcommand{\RR}{\mathbb{R}}
\newcommand{\curly}[1]{\mathcal{#1}}
\newcommand{\B}{\curly{B}}
\renewcommand{\C}{\curly{C}}
\renewcommand{\D}{\curly{D}}
\newcommand{\K}{\curly{K}}
\newcommand{\N}{\curly{N}}
\renewcommand{\P}{\curly{P}}
\renewcommand{\R}{\curly{R}}
\renewcommand{\S}{\curly{S}}
\newcommand{\T}{\curly{T}}
\newcommand{\Ran}{\RR_{\text{an}}}
\numberwithin{equation}{section}
\title {Hausdorff limits of Rolle leaves}
\author {Jean-Marie Lion and Patrick Speissegger}
\address {IRMAR, Universit\'e de Rennes I, Campus de
  Beaulieu, 35042 Rennes cedex, France}
\email{jean-marie.lion@univ-rennes1.fr}
\address {Department of Mathematics \& Statistics, McMaster
  University, 1280 Main Street West, Hamilton, Ontario L8S 4K1,
  Canada}
\email {speisseg@math.mcmaster.ca}
\date{\today}
\subjclass {Primary 14P10, 58A17; Secondary 03C99}
\keywords {O-minimal structures, pfaffian systems, analytic
  stratification, Hausdorff limits}
\thanks {Supported by the CNRS of France and NSERC of Canada grant
  RGPIN 261961.}
\begin{document}

\begin{abstract}
  Let $\R$ be an o-minimal expansion of the real field.  We introduce
  a class of Hausdorff limits, the $T^\infty$-limits over $\R$, that
  do not in general fall under the scope of Marker and Steinhorn's
  definability-of-types theorem.  We prove that if $\R$ admits
  analytic cell decomposition, then every $T^\infty$-limit over $\R$
  is definable in the pfaffian closure of $\,\R$.
\end{abstract}

\maketitle

\section*{Introduction}

We fix an o-minimal expansion $\R$ of the real field.  In this paper,
we study $T^\infty$-limits over $\R$ as defined in Section \ref{def}
below; they generalize the pfaffian limits over $\R$ introduced in
\cite[Section 4]{Lion:2009cf}.  Pfaffian limits over $\R$ are
definable in the pfaffian closure $\P(\R)$ of $\R$
\cite{Speissegger:1999nt}, by the variant of Marker and Steinhorn's
definability-of-types theorem \cite{Marker:1994jw} found in van den
Dries \cite[Theorem 3.1]{Dries:2005ls} and \cite[Theorem
1]{Lion:2004jv}.  The $T^\infty$-limits over $\R$ considered here do
not seem to fall under the scope of these theorems, as explained in
Section \ref{def} below.  Nevertheless, $T^\infty$-limits were used by
Lion and Rolin \cite{Lion:1998ay} to establish the o-minimality of the
expansion of $\Ran$ by all Rolle leaves over $\Ran$ of codimension
one.

To state our results, we work in the setting of
\cite[Introduction]{Lion:2009cf}; in particular, recall that a set $W
\subseteq \RR^n$ is a \textbf{Rolle leaf over $\R$} if there exists a
nested Rolle leaf $(W_0, \dots, W_k)$ over $\R$ such that $W = W_k$.

First, we obtain the following generalization of \cite[Th\'eor\`eme
1]{Lion:1998ay}.

\begin{prclaim}[Theorem A] 
  Let $\N(\R)$ be the expansion of $\R$ by all Rolle leaves over $\R$.
  \begin{enumerate}
  \item There is an o-minimal expansion $T^\infty(\R)$ of $\N(\R)$ in
    which every $T^\infty$-limit over $\R$ is definable.
  \item Let $M \subseteq \RR^n$ be a bounded, definable $C^2$-manifold
    and $d$ be a definable and integrable nested distribution on $M$.
    Let $K \subseteq \RR^n$ be a $T^\infty$-limit obtained from $d$.
    Then $\dim K \le \dim d$.
  \end{enumerate}
\end{prclaim}

The question then arises how $T^\infty(\R)$ relates to the pfaffian
closure $\P(\R)$ of $\R$.  Indeed, we do not know in general if
$T^\infty(\R)$ is interdefinable with $\N(\R)$ or $\P(\R)$, or if
$T^\infty(T^\infty(\R))$ is interdefinable with $T^\infty(\R)$.  Based
on \cite{Lion:2009cf}, we can answer such questions under an
additional hypothesis:

\begin{prclaim}[Theorem B]
  Assume that $\R$ admits analytic cell decomposition.  
  \begin{enumerate}
  \item Every $T^\infty$-limit over $\P(\R)$ is definable in $\P(\R)$.
  \item The structures $T^\infty(\R)$ and $\P(\R)$ are interdefinable;
    in particular, $T^\infty(\R)$ and $T^\infty(T^\infty(\R))$ are
    interdefinable.
  \end{enumerate}
\end{prclaim}

We view the combination of Theorems A(2) and B(1) as a non-first order
extension of \cite[Theorem 3.1]{Dries:2005ls} and \cite[Theorem
1]{Lion:2004jv}.

Our proofs of these theorems rely heavily on terminology and notation
introduced in \cite[Introduction and Section 2]{Lion:2009cf}; we do
not repeat the respective definitions here.  We prove Theorem A in
Section \ref{omin} below using the approach of
\cite{Speissegger:1999nt}, but based on a straightforward adaptation
of some results of \cite[Section 4]{Lion:2009cf} to $T^\infty$-limits
carried out in Section \ref{limits} below.  Theorem B then follows by
adapting \cite[Proposition 7.1]{Lion:2009cf} to $T^\infty$-limits and
using \cite[Proposition 10.4]{Lion:2009cf}; the details are given in
Section \ref{theorem_B}.

\section{The definitions}  \label{def}

Let $M \subseteq \RR^n$ be a bounded, definable $C^2$-manifold of
dimension $m$.  We adopt the terminology and results found in
\cite[Introduction and Section 2]{Lion:2009cf}, and we let $d = (d_0,
\dots, d_k)$ be a definable and integrable nested distribution on $M$.

A sequence $(V_\iota)_{\iota \in \NN}$ of integral manifolds of $d_k$
is a \textbf{$T^\infty$-sequence of integral manifolds of $d$} if
there are a core distribution $e = (e_0, \dots, e_l)$ of $d$, a
sequence $(W_\iota)$ of Rolle leaves of $e$ and a definable family
$\B$ of closed integral manifolds of $d_{k-l}$ such that each
$V_\iota$ is an admissible integral manifold of $d$ with core
$W_\iota$ corresponding to $e$ and definable part in $\B$
corresponding to $W_\iota$, as defined in \cite[Definition
4.1]{Lion:2009cf}.  

In this situation, we call $(W_\iota)$ the \textbf{core sequence} of
the sequence $(V_\iota)$ \textbf{corresponding to $e$} and $\B$ a
\textbf{definable part} of the sequence $(V_\iota)$
\textbf{corresponding to $(W_\iota)$}.

\begin{rmks}
  \begin{enumerate}
  \item We think of the core sequence of $(V_\iota)$ as representing
    the ``non-definable part'' of $(V_\iota)$.  If $W_\iota = W_1$ for
    all $\iota$, then $(V_\iota)$ is an admissible sequence of
    integral manifolds of $d$ as defined in \cite[Definition
    4.3]{Lion:2009cf}.
  \item Let $(V_\iota)$ be a $T^\infty$-sequence of integral manifolds
    of $d$.  Then there is a $T^\infty$-sequence $(U_\iota)$ of
    integral manifolds of $(d_0, \dots, d_{k-1})$ such that $V_\iota
    \subseteq U_\iota$ for $\iota \in \NN$.
  \end{enumerate}
\end{rmks}

Let $(V_\iota)$ be a $T^\infty$-sequence of integral manifolds of $d$.
If $(V_\iota)$ converges to $K$ in $\K_n$ (the space of all compact
subsets of $\RR^n$ equipped with the Hausdorff metric), we call $K$ a
\textbf{$T^\infty$-limit over $\R$}.  In this situation, we say that
$K$ \textbf{is obtained from} $d$, and we put
\begin{equation*}
  \deg K := \min\set{\deg f:\ K \text{ is obtained from } f}.
\end{equation*}

\begin{rmks}
  \begin{lenumerate}{}{2}
  \item It is unknown whether the family of all Rolle leaves of $e$ is
    definable in $\P(\R)$ \footnote{For instance, a positive answer to
      this question for all $e$ definable in $\P(\R)$ would imply the
      second part of Hilbert's 16th problem.}.  As a consequence,
    contrary to the situation described by \cite[Lemma
    4.5]{Lion:2009cf} for pfaffian limits over $\R$, the variant of
    Marker and Steinhorn's definability-of-types theorem
    \cite{Marker:1994jw} found in \cite[Theorem 3.1]{Dries:2005ls} and
    \cite[Theorem 1]{Lion:2004jv} does not apply; in particular, we do
    not know in general whether a $T^\infty$-limit over $\R$ is
    definable in $\P(\R)$.
  \item If $W_\iota = W_1$ for all $\iota$, then $K$ is a pfaffian
    limit over $\R$ as introduced in \cite[Definition
    4.4]{Lion:2009cf}.
  \end{lenumerate} 
\end{rmks}

\section{Towards the proof of Theorem A}
\label{limits}

Let $M \subseteq \RR^n$ be a definable $C^2$-manifold of dimension
$m$.  

\subsection*{Pfaffian fiber cutting}
We fix a finite family $\Delta = \{d^1, \dots, d^q\}$ of definable
nested distributions on $M$; we write $d^p = (d^p_0, \dots,
d^p_{k(p)})$ for $p = 1, \dots, q$.  As in \cite[Section
3]{Lion:2009cf}, we associate to $\Delta$ the following set of
distributions on $M$:
\begin{equation*}
  \D_\Delta := \set{d^0_0 \cap d^1_{k(1)} \cap \cdots \cap
    d^{p-1}_{k(p-1)} \cap d^p_j:\ p = 1, \dots, q \text{ and } j = 0,
    \dots, k(p)}, 
\end{equation*}
where we put $d^0_0 := g_M$.  If $N$ is a $C^2$-submanifold of $M$
compatible with $\D_\Delta$, we let $d^{\Delta,N} =
\left(d^{\Delta,N}_0, \dots, d^{\Delta,N}_{k(\Delta,N)}\right)$ be the
nested distribution on $N$ obtained by listing the set $\set{g^N:\ g
  \in \D_\Delta}$ in order of decreasing dimension.  In this
situation, if $V_p$ is an integral manifold of $d^p_{k(p)}$, for $p=1,
\dots, q$, then the set $N \cap V_1 \cap \cdots \cap V_q$ is an
integral manifold of $d^{\Delta,N}_{k(\Delta,N)}$.

Let $A \subseteq M$ be definable.  For $I \subseteq \{1, \dots, q\}$
we put $\Delta(I) := \{d^p:\ p \in I\}$.

\begin{lemma}
  \label{cell_cutting}
  Let $I \subseteq \{1, \dots, q\}$.  Then there is a finite
  partition $\P$ of definable $C^2$-cells contained in $A$ such that
  $\P$ is compatible with $\D_{\Delta(J)}$ for every $J \subseteq \{1,
  \dots, q\}$ and
  \begin{renumerate}
  \item $\dim d^{\Delta(I),N}_{k(\Delta(I),N)} = 0$ for every $N
    \in \P$;
  \item whenever $V_p$ is a Rolle leaf of $d^p$ for $p \in I$, every
    component of $A \cap \bigcap_{p \in I} V_p$ intersects some
    cell in $\P$.
  \end{renumerate}
\end{lemma}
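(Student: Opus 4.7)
The lemma is a \emph{pfaffian fiber cutting} statement: the cells of $\P$ will generally be of lower dimension than $A$, chosen so that on each cell the last component of $d^{\Delta(I),N}$ has collapsed to dimension zero, while still catching every component of the Rolle-leaf intersection. My plan is to argue by induction on $\dim A$, paralleling the fiber-cutting lemma of \cite[Section 3]{Lion:2009cf} and the classical approach of \cite{Speissegger:1999nt}.

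First I would apply $C^2$-cell decomposition in $\R$ to produce a finite partition $\P_0$ of $A$ into definable $C^2$-cells that is simultaneously compatible with $\D_{\Delta(J)}$ for every $J \subseteq \{1,\dots,q\}$; only finitely many such subfamilies arise, so a single application suffices. Any cell $N \in \P_0$ on which $\dim d^{\Delta(I),N}_{k(\Delta(I),N)} = 0$ can be kept directly as a cell of $\P$; it automatically contributes to condition (ii) at every component of $A \cap \bigcap_{p \in I} V_p$ that it meets, and contributes trivially to (i).

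For a cell $N \in \P_0$ with $\dim d^{\Delta(I),N}_{k(\Delta(I),N)} > 0$ I would perform the reduction step. After further subdivision if necessary, there is a coordinate projection $\pi : N \to \RR^s$ whose fibers are generically transverse to $d^{\Delta(I),N}_{k(\Delta(I),N)}$; the tangent locus $T(N) \subseteq N$ on which $\pi$, restricted to integral manifolds of $d^{\Delta(I),N}_{k(\Delta(I),N)}$, fails to be a local diffeomorphism is then a definable set of strictly smaller dimension. The crucial input is the Rolle property: any component $C$ of $A \cap \bigcap_{p \in I} V_p$ sitting inside $N$ is an integral manifold of $d^{\Delta(I),N}_{k(\Delta(I),N)}$, and iterating the Rolle property through the nested structure of each $d^p$ (with $p \in I$) forces $\pi\rest{C}$ either to have a critical point, placing $C$ inside $T(N)$, or to accumulate on $\bar N \setminus N$, placing a limit point of $C$ into some lower-dimensional cell of $\P_0$.

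Applying the inductive hypothesis to $T(N)$, whose dimension is strictly smaller than $\dim N \le \dim A$, with the family $\Delta$ restricted to $T(N)$, yields a cell partition there satisfying (i) and (ii). Assembling these partitions across all cells of $\P_0$, and refining once more via cell decomposition to restore simultaneous compatibility with every $\D_{\Delta(J)}$, produces the required $\P$. The main obstacle will be the iterated Rolle step: when $|I| > 1$ one is no longer dealing with a single Rolle foliation, and the argument must track the Rolle property through the entire nested sequence of each $d^p$ simultaneously, while preserving combinatorial compatibility with every $\D_{\Delta(J)}$. The nested distribution $d^{\Delta(I),N}$ is precisely designed to repackage the intersection of the leaves of the $d^p$ into a single integrable distribution, which is the feature that lets the classical one-leaf Rolle argument survive this generalization.
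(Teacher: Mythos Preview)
Your inductive framework and the reduction via $C^2$-cell decomposition are right, and they match the paper's setup. The gap is in your reduction step on a cell $N$ with $\dim d^{\Delta(I),N}_{k(\Delta(I),N)} > 0$: using a \emph{coordinate projection} $\pi$ does not give the dichotomy you want. Recall that each $V_p$ is closed in $M$, so $A \cap \bigcap_{p\in I} V_p$ is closed in $A$ and its components are closed in $A$. If such a component $C$ is contained in $N$ and has a limit point $x \in \bar N \setminus N$, then either $x \in A$, forcing $x \in C \subseteq N$ (contradiction), or $x \notin A$, so $x$ lies in no cell of $\P_0$ at all. Thus your ``escape to a lower-dimensional cell of $\P_0$'' branch is empty, and you are left needing $\pi\rest{C}$ to have a critical point; but $C$ is closed in $A$, not compact, and a generic coordinate projection need not attain an extremum on it.

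The paper avoids this by invoking \cite[Lemma 3.1]{Lion:2009cf}, which supplies a \emph{carpeting} function $\phi$ on the cell $A$ together with a lower-dimensional definable set $B \subseteq A$ containing all critical points of $\phi$ restricted to $d^{\Delta(I),A}_{k(\Delta(I),A)}$. Because $\phi$ is proper on $A$ (its superlevel sets are compact) and $X = A \cap \bigcap_{p\in I} V_p$ is a closed embedded submanifold of $A$, $\phi$ genuinely attains a maximum on each component of $X$, and that maximum lies in $B$. One then applies the inductive hypothesis to $B$. Your argument becomes correct if you replace the coordinate projection by such a carpeting function; the ``iterated Rolle'' bookkeeping you worry about at the end is already absorbed into the fact that $X$ is an integral manifold of $d^{\Delta(I),A}_{k(\Delta(I),A)}$, so a single critical-point argument suffices.
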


\begin{proof}
  By induction on $\dim A$; if $\dim A = 0$, there is nothing to do,
  so we assume $\dim A > 0$ and the corollary is true for lower values
  of $\dim A$.  By \cite[Proposition 2.2]{Lion:2009cf} and the
  inductive hypothesis, we may assume that $A$ is a $C^2$-cell
  compatible with $\D_{\Delta(J)}$ for $J \subseteq \{1, \dots, q\}$.
  Thus, if $\dim d^{\Delta(I),A}_{k(\Delta(I),A)} = 0$, we are done;
  otherwise, we let $\phi$ and $B$ be as in \cite[Lemma
  3.1]{Lion:2009cf} with $\Delta(I)$ in place of $\Delta$.

  Let $V_p$ be a Rolle leaf of $d^p$ for each $p$; it suffices to
  show that every component of $X:= A \cap \bigcap_{p \in I} V_p$
  intersects $B$.  However, since $d^{\Delta(I),A}_{k(\Delta(I),A)}$
  has dimension, $X$ is a closed, embedded submanifold of $A$.  Thus,
  $\phi$ attains a maximum on every component of $X$, and any point in
  $X$ where $\phi$ attains a local maximum belongs to $B$.
\end{proof}

\begin{cor}  
  \label{cell_fiber_cutting}
  Let $d$ be a definable nested distribution on $M$ and $m \le n$.
  Then there is a finite partition $\P$ of $C^2$-cells contained in
  $A$ such that for every Rolle leaf $\,V$ of $d$, we have
  \begin{equation*}
    \Pi_m(A \cap V) = \bigcup_{N \in \P} \Pi_m(N \cap V)
  \end{equation*}
  and for every $N \in \P$, the set $N \cap V$ is a
  submanifold of $\,U$, $\Pi_m\rest{(N \cap V)}$ is an immersion and
  for every $n' \le n$ and every strictly increasing $\lambda:\{1,
  \dots, n'\} \into \{1, \dots, n\}$, the projection
  $\Pi_{\lambda}\rest{(N \cap V)}$ has constant rank.
\end{cor}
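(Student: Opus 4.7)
The plan is to apply Lemma~\ref{cell_cutting} to an enlarged family of definable nested distributions on $M$, with the index set $I$ chosen so that conclusion~(i) of that lemma forces $\Pi_m$ to be immersive on each resulting cell while conclusion~(ii) yields the projection identity.

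First I would enumerate the finitely many strictly increasing maps $\lambda:\{1,\dots,n'\}\into\{1,\dots,n\}$ with $n' \le n$ as $\lambda_1,\dots,\lambda_t$. For each $s$ and each $i \le k$, the map $p \mapsto \dim\bigl(d_i(p)\cap \ker d\Pi_{\lambda_s}(p)\bigr)$ is definable on $M$, so after a preliminary $C^2$-cell decomposition of $M$ making all these dimensions locally constant, each intersection $d_i\cap\ker d\Pi_{\lambda_s}$ becomes a genuine definable sub-distribution of $d_i$; appending these intersections to $d$ produces a definable nested distribution $d^{\lambda_s}$ on $M$. Separately, I would form the nested distribution $e := (d_0,\dots,d_k,\,d_k\cap\ker d\Pi_m)$ under the same refinement. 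I would then apply Lemma~\ref{cell_cutting} to $\Delta := \{d, e, d^{\lambda_1}, \dots, d^{\lambda_t}\}$ and $A$ with $I := \{1, 2\}$ (so $d^1 = d$ and $d^2 = e$), obtaining a finite partition $\P$ of $C^2$-cells in $A$ compatible with every $\D_{\Delta(J)}$ and satisfying $\dim\bigl(d_k\cap\ker d\Pi_m\bigr)^N = 0$ for each $N \in \P$.

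For any Rolle leaf $V$ of $d$ and any $N \in \P$, compatibility forces $N\cap V$ to be a $C^2$-submanifold of $N$ with tangent distribution $(d_k)^N$, and forces the rank of $\Pi_{\lambda_s}\rest{(N\cap V)}$ to be the constant $\dim(d_k)^N - \dim\bigl((d_k)^N\cap\ker d\Pi_{\lambda_s}\bigr)$ for every $s$. The vanishing of $\dim(d_k\cap\ker d\Pi_m)^N$ then makes $d\Pi_m$ injective on $(d_k)^N$, so $\Pi_m\rest{(N\cap V)}$ is an immersion. For the projection identity, given $p \in A \cap V$, I would extend the nested Rolle leaf data $(W_0,\dots,W_k)$ of $V$ (with $W_k = V$) by choosing a Rolle leaf $V_2$ of the restricted distribution $(d_k\cap\ker d\Pi_m)^V$ through $p$; the resulting $(W_0,\dots,W_k,V_2)$ is a nested Rolle leaf of $e$, so $V_2$ is a Rolle leaf of $e$ contained in $V$. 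Since $V_2$ is connected and tangent to $\ker d\Pi_m$, $\Pi_m$ is constant on $V_2$. Applying conclusion~(ii) of Lemma~\ref{cell_cutting} with $V_1 := V$ and $V_2$ yields a point $q \in N \cap V_2 \subseteq N \cap V$ for some $N \in \P$; since $\Pi_m(p) = \Pi_m(q)$, we obtain $\Pi_m(p) \in \Pi_m(N\cap V)$, establishing $\Pi_m(A\cap V) \subseteq \bigcup_{N\in\P}\Pi_m(N\cap V)$ (the reverse inclusion is immediate).

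The main obstacle I anticipate is the Rolle-leaf extension step: one must verify that a Rolle leaf of the restricted distribution $(d_k\cap\ker d\Pi_m)^V$ through an arbitrary $p \in V$ genuinely exists within the nested Rolle leaf framework of \cite[Introduction and Section 2]{Lion:2009cf}, so that its concatenation with the nested data of $V$ qualifies as a nested Rolle leaf of $e$. Once this is in place, the rest of the argument is a bookkeeping exercise, and the interplay of compatibility with $\D_{\Delta(J)}$, the $0$-dimensional conclusion~(i), and the component-intersection conclusion~(ii) delivers all four clauses of the corollary simultaneously.
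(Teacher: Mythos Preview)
Your strategy---apply Lemma~\ref{cell_cutting} to an enlarged family so that conclusion~(i) forces immersivity and conclusion~(ii) yields the projection identity---is the right one, but your construction of $e := (d_0,\dots,d_k,\,d_k\cap\ker d\Pi_m)$ does not in general produce a nested distribution: successive entries must drop in dimension by exactly one, whereas $d_k\cap\ker d\Pi_m$ may have codimension up to $m$ in $d_k$. The same defect affects your $d^{\lambda_s}$. You could try to repair this by inserting the intermediate distributions $d_k\cap\ker dx_1,\ d_k\cap\ker dx_1\cap\ker dx_2,\ \dots$ one coordinate at a time, but then the Rolle-leaf extension step becomes a chain of extensions, each of which requires you to check the Rolle property for a codimension-one foliation of a non-definable leaf---exactly the obstacle you flag at the end.

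The paper's proof dissolves this obstacle rather than confronting it. It applies Lemma~\ref{cell_cutting} with $q:=n+1$, $d^p:=\ker dx_p$ for $p=1,\dots,n$, $d^{n+1}:=d$, and $I:=\{1,\dots,m,\,n+1\}$. The point is that the $\ker dx_p$ are kept as \emph{separate} members of $\Delta$, each a nested distribution of length two, rather than being intersected with $d$. Their Rolle leaves are simply the components of the hyperplane slices $M\cap\{x_p=c\}$, so conclusion~(ii) applied with $V_{n+1}=V$ and $V_p=M\cap\{x_p=p_p\}$ for $p\le m$ immediately gives, for each point of $A\cap V$, a cell $N\in\P$ meeting the $\Pi_m$-fiber through that point---no extension of nested Rolle data is needed. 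Conclusion~(i) makes $(d_k\cap\ker d\Pi_m)^N$ zero-dimensional, whence the immersion property; and compatibility with $\D_{\Delta(J)}$ for the subsets $J=\{\lambda(1),\dots,\lambda(n'),\,n+1\}$ delivers the constant-rank clause for every $\Pi_\lambda$.
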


\begin{proof}
  Apply Lemma \ref{cell_cutting} with $q:= n+1$, $d^p:= \ker
  dx_p$ for $p=1, \dots, n$, $d^q:= d$ and $I:= \{1, \dots, m,n+1\}$.
\end{proof}

\subsection*{$T^\infty$-limits}
We assume that $M$ has a definable $C^2$-carpeting function $\phi$,
and we let $d = (d_0, \dots, d_k)$ be a definable distribution on $M$
with core distribution $e = (e_0, \dots, e_l)$. 

First, we reformulate \cite[Proposition 4.7]{Lion:2009cf}.  We adopt
the notation introduced before \cite[Proposition 4.6]{Lion:2009cf} and
note that the $q$ in \cite[Remark 4.2]{Lion:2009cf} can be chosen
independent of the particular $W$.

\begin{prop}
  \label{frontier_limit}
  Let $(V_\iota)$ be a $T^\infty$-sequence of integral manifolds of
  $d$ with core sequence $(W_\iota)$, and assume that $K':= \lim_\iota
  \fr V_\iota$ exists.  Then $K'$ is a finite union of
  $T^\infty$-limits obtained from $d^{M'}$ with core sequences among
  $\big((W_\iota)^{M'}_1\big)_\iota, \dots,
  \big((W_\iota)^{M'}_q\big)_\iota$.
\end{prop}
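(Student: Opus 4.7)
The plan is to adapt the argument of \cite[Proposition 4.7]{Lion:2009cf}, which treats the case of a constant core sequence, to the present situation where the core sequence $(W_\iota)$ varies with $\iota$. The crucial observation has already been recorded just before the statement: the constant $q$ appearing in \cite[Remark 4.2]{Lion:2009cf} can be chosen independently of the particular Rolle leaf $W$. This uniformity is precisely what will make the frontier decomposition work simultaneously for all $\iota$.

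First I would apply \cite[Proposition 4.6]{Lion:2009cf} to each $V_\iota$ individually. For each $\iota$, this presents $\fr V_\iota$ as a finite union
\begin{equation*}
  \fr V_\iota = F^{(1)}_\iota \cup \cdots \cup F^{(q)}_\iota,
\end{equation*}
where each $F^{(j)}_\iota$ is an admissible integral manifold of $d^{M'}$ on a definable submanifold $M' \subseteq \fr M$; the manifold $M'$ and the associated pfaffian fiber-cutting data are themselves definable, hence independent of $\iota$, and the core of $F^{(j)}_\iota$ is the restricted Rolle leaf $(W_\iota)^{M'}_j$ while its definable part arises from $\B$ by the cutting procedure of \cite[Section 3]{Lion:2009cf}. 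Invoking the uniform choice of $q$ noted above is what allows the index $j$ to range over a single set $\{1, \dots, q\}$ and the derived definable family of closed integral manifolds on $M'$ to serve every $\iota$.

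Next I would extract convergent subsequences. Each $(F^{(j)}_\iota)_\iota$ lies in a fixed compact subset of $\K_n$, so by Blaschke's selection theorem, combined with a diagonal argument over $j = 1, \dots, q$, we may pass to a subsequence after which each $(F^{(j)}_\iota)_\iota$ converges in $\K_n$ to some compact $K'_j$. Since the Hausdorff limit of a finite union is the finite union of the limits, we obtain $K' = K'_1 \cup \cdots \cup K'_q$. By construction $(F^{(j)}_\iota)_\iota$ is a $T^\infty$-sequence of integral manifolds of $d^{M'}$ with core sequence $\big((W_\iota)^{M'}_j\big)_\iota$, so each $K'_j$ is a $T^\infty$-limit of the required form.

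The main obstacle I anticipate is verifying carefully that the definable parts produced by the frontier cutting for distinct values of $\iota$ can in fact be collected into a single definable family independent of $\iota$; this is where the constant-core argument of \cite[Proposition 4.7]{Lion:2009cf} must be genuinely strengthened, and it is where the uniformity of $q$ in $W$ really does the work. Once that point is settled, the extraction of subsequences and the reassembly of $K'$ as a finite union of $T^\infty$-limits is routine.
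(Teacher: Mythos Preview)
Your proposal is correct and follows essentially the same approach as the paper: the paper's proof simply states that one argues ``exactly as for \cite[Proposition 4.7]{Lion:2009cf}, except for replacing `core $W$' by `core sequence $(W_\iota)$' and `core $W^{M'}_p$' by `core sequence $\big((W_\iota)^{M'}_p\big)$'.'' Your write-up unpacks precisely this substitution, correctly identifying the uniformity of $q$ in $W$ from \cite[Remark 4.2]{Lion:2009cf} as the key ingredient that makes the adaptation go through.
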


\begin{proof}
  Exactly as for \cite[Proposition 4.7]{Lion:2009cf}, except for
  replacing ``core $W$'' by ``core sequence $(W_\iota)$'' and ``core
  $W^{M'}_p$'' by ``core sequence $\big((W_\iota)^{M'}_p\big)$''.
\end{proof}

Second, as we do not know yet whether $T^\infty$-limits are definable
in an \hbox{o-minimal} structure, we work with the following notion of
dimension (see also van den Dries and Speissegger \cite[Section
8.2]{Dries:1998xr}): we call $N \subseteq \RR^n$ a
\textbf{$C^0$-manifold of dimension $p$} if $N \neq \emptyset$ and
each point of $N$ has an open neighbourhood in $N$ homeomorphic to
$\RR^p$; in this case $p$ is uniquely determined (by a theorem of
Brouwer), and we write $p = \dim(N)$.  Correspondingly, a set $S
\subseteq \RR^n$ \textbf{has dimension} if $S$ is a countable union of
$C^0$-manifolds, and in this case put
\begin{equation*}
  \dim(S) := \begin{cases}
    \max\{\dim(N): N \subseteq S \text{ is a $C^0$-manifold} \}
    &\text{if } S \neq \emptyset \\ -\infty &\text{otherwise}.
  \end{cases}
\end{equation*}
It follows (by a Baire category argument) that, if $S = \bigcup_{i \in
  \NN} S_i$ and each $S_i$ has dimension, then $S$ has dimension and
$\dim(S) = \max\{\dim(S_i): i \in \NN\}$.  Thus, if $N$ is a
$C^1$-manifold of dimension $p$, then $N$ has dimension in the sense
of this definition and the two dimensions of $N$ agree.

\begin{cor}
  \label{limit_dimension}
  In the situation of \cite[Lemma 1.5]{Lion:2009cf}, the set
  $\lim_\iota V_\iota \setminus \lim_\iota \fr V_\iota$ is either
  empty or has dimension $p$. \qed
\end{cor}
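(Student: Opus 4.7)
The plan is to prove the two bounds $\dim L \ge p$ (when $L \neq \emptyset$) and $\dim L \le p$ separately, where $L := \lim_\iota V_\iota \setminus \lim_\iota \fr V_\iota$. The key is to localize near each point of $L$ and use the pfaffian fiber cutting of Corollary \ref{cell_fiber_cutting} to bring the $V_\iota$ into constant-rank form on finitely many cells.

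First I would fix $x \in L$ and pick $r > 0$ such that $\bar B(x,r) \cap \lim_\iota \fr V_\iota = \emptyset$; for $\iota$ sufficiently large, $V_\iota \cap \bar B(x,r)$ is therefore a closed (in $\bar B(x,r)$) $C^2$-embedded $p$-submanifold, with no nearby frontier points to worry about. Applying Corollary \ref{cell_fiber_cutting} (successively for each relevant $m$ and nested distribution) gives a finite partition $\P$ of $M$ into $C^2$-cells such that, on every $N \in \P$ and every strictly increasing $\lambda$, the projection $\Pi_\lambda\rest{(N \cap V)}$ has constant rank for each Rolle leaf $V$ underlying the admissible manifolds $V_\iota$. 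Choosing $x_\iota \in V_\iota$ with $x_\iota \to x$, the finiteness of $\P$ and pigeonhole give, after passing to a subsequence, a single cell $N_0 \in \P$ with $x_\iota \in N_0$ for all $\iota$.

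For the lower bound, I would extract a further subsequence along which the tangent planes $T_{x_\iota} V_\iota$ converge in the Grassmannian to a $p$-plane $T$, and pick $\lambda$ of length $p$ with $\Pi_\lambda\rest{T}$ an isomorphism. The constant-rank conclusion on $N_0$ forces $\Pi_\lambda\rest{(N_0 \cap V_\iota)}$ to have rank $p$ eventually, so each $V_\iota$ is locally a $C^2$-graph over a common open $U \subseteq \RR^p$ near $\Pi_\lambda(x)$, with uniformly bounded $C^1$-norm (thanks to closedness in $\bar B(x,r)$ and tangent plane convergence). Arzel\`a--Ascoli then yields a limiting $C^0$-graph $\Gamma \subseteq L$ of dimension $p$ through $x$.

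For the upper bound I would cover $L$ by countably many balls $\bar B(x_i, r_i)$ as above and, on each $\bar B(x_i, r_i) \cap \bar N$ for $N \in \P$, run the Arzel\`a--Ascoli extraction for every coordinate projection $\Pi_\lambda$ of length $\le p$, producing a countable collection of limiting $C^0$-graphs of dimension $\le p$ whose union contains $L \cap \bar B(x_i, r_i)$. Together with the lower bound, this shows $L$ has dimension in the paper's sense and that $\dim L = p$. The main obstacle is the upper bound: Hausdorff limits of $p$-dimensional manifolds can a priori have higher topological dimension through frontier accumulation or space-filling, and both pathologies must be ruled out simultaneously—this is precisely where the frontier-avoidance hypothesis and the constant-rank conclusion of Corollary \ref{cell_fiber_cutting} applied to \emph{every} $\lambda$ are essential, mirroring the role played by the analogous step in \cite[Lemma 1.5]{Lion:2009cf}.
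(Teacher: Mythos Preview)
The paper gives no proof here: the $\qed$ after the statement signals that the corollary is an immediate consequence of \cite[Lemma~1.5]{Lion:2009cf} together with the sentence directly preceding it (a $C^1$-manifold of dimension $p$ has dimension $p$ in the $C^0$-sense just introduced). In other words, Lemma~1.5 of the cited paper already establishes that $\lim_\iota V_\iota \setminus \lim_\iota \fr V_\iota$, when nonempty, is a $C^1$-manifold of dimension $p$; the only new content of the corollary is the compatibility of the two notions of dimension, which was just remarked.

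Your proposal instead reconstructs the content of \cite[Lemma~1.5]{Lion:2009cf} from scratch, via localization, pigeonhole on a cell decomposition, tangent-plane convergence and Arzel\`a--Ascoli. The outline is essentially the standard proof of that external lemma and is sound in spirit, but it is far more than the corollary asks for. There is also a mismatch in hypotheses: you invoke Corollary~\ref{cell_fiber_cutting}, which concerns Rolle leaves of a definable nested distribution, whereas ``the situation of \cite[Lemma~1.5]{Lion:2009cf}'' is a purely geometric hypothesis on the sequence $(V_\iota)$ (a boundedness condition on tangent planes) that already supplies the constant-rank control you are trying to extract. Your detour through Corollary~\ref{cell_fiber_cutting} is therefore both unnecessary and, strictly speaking, not licensed by the stated hypotheses---the $V_\iota$ in that lemma need not arise from Rolle leaves at all.
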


Therefore, we replace \cite[Lemma 4.5]{Lion:2009cf} by

\begin{prop}
  \label{limit_dim}
  Let $K$ be a $T^\infty$-limit obtained from $d$.  Then $K$ has
  dimension and satisfies $\dim K \le \dim d$.
\end{prop}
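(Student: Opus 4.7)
The plan is to adapt the proof of \cite[Lemma 4.5]{Lion:2009cf}, replacing every occurrence of ``core $W$'' by ``core sequence $(W_\iota)$'' throughout, and to drive the argument by induction on $\dim M$ using the two replacement tools already in place: Proposition \ref{frontier_limit} for the frontier and Corollary \ref{limit_dimension} for its complement. The base case $\dim M = 0$ is immediate, since then every $V_\iota$ is a finite definable set (of uniformly bounded cardinality) and so $K$ is finite of dimension $0 \le \dim d$.

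For the inductive step, let $(V_\iota)$ be a $T^\infty$-sequence of integral manifolds of $d$ converging to $K$, with core sequence $(W_\iota)$ corresponding to the core distribution $e$. By compactness of $\K_n$ I pass to a subsequence --- without changing the Hausdorff limit $K$ --- along which $K' := \lim_\iota \fr V_\iota$ exists in $\K_n$, and decompose
$$ K \;=\; (K\setminus K') \,\cup\, K'. $$
By Corollary \ref{limit_dimension}, the set $K\setminus K'$ is either empty or has dimension $p = \dim d_k = \dim d$. By Proposition \ref{frontier_limit}, $K'$ is a finite union $K'_1 \cup \cdots \cup K'_s$ of $T^\infty$-limits, where each $K'_j$ is obtained from a restricted nested distribution $d^{M'_j}$ on a submanifold $M'_j$ of strictly smaller ambient dimension; the inductive hypothesis applied to each $K'_j$ both ensures that $K'_j$ has dimension and gives $\dim K'_j \le \dim d^{M'_j}$. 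Since a finite union of sets having dimension again has dimension equal to the maximum, $K$ has dimension and
$$ \dim K \;\le\; \max\bigl(\dim d,\, \dim d^{M'_1},\, \dots,\, \dim d^{M'_s}\bigr). $$

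The one step that remains, and also the only point where I expect real bookkeeping, is to check that $\dim d^{M'_j} \le \dim d$ for each of the frontier strata $M'_j$ produced by the carpeting-function construction recalled before \cite[Proposition 4.6]{Lion:2009cf}. This is exactly the estimate handled in the pfaffian-limit version of the lemma: the induced nested distribution $d^{M'}$ arises by restricting a pfaffian ``cut'' of $d$ to a compatible lower-dimensional submanifold of the frontier, and the construction is designed so that the top stratum of $d^{M'}$ has dimension bounded by that of $d$. Granting this, the induction closes, and --- as advertised at the start of the section --- no genuinely new ingredients beyond Proposition \ref{frontier_limit} and Corollary \ref{limit_dimension} are needed, the argument being a line-by-line transcription of the pfaffian-limit proof with ``core $W$'' everywhere replaced by ``core sequence $(W_\iota)$''.
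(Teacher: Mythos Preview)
Your overall strategy---split $K$ into $(K\setminus K')\cup K'$, use Corollary~\ref{limit_dimension} on the first piece and Proposition~\ref{frontier_limit} on the second---matches the paper's.  But there are two genuine problems.

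First, you invoke Corollary~\ref{limit_dimension} directly, but that corollary is stated ``in the situation of \cite[Lemma 1.5]{Lion:2009cf}'', which requires the top distribution $d_k$ to be $2n$-bounded.  This is not automatic: the paper spends a full paragraph reducing to this case, first partitioning $M$ into $C^2$-cells via \cite[Proposition 2.2 and Remark 4.2]{Lion:2009cf}, then covering $M$ by the open sets $M_{\sigma,2n}$ of \cite[Lemma 1.3]{Lion:2009cf}, passing to subsequences so that each $\lim_\iota(V_\iota\cap M_{\sigma,2n})$ exists, and finally replacing $M$ by $\sigma^{-1}(M_{\sigma,2n})$.  Without this reduction, Corollary~\ref{limit_dimension} does not apply and the dimension claim for $K\setminus K'$ is unjustified.

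Second, you induct on $\dim M$ and assert that the frontier manifolds $M'_j$ have ``strictly smaller ambient dimension''.  The construction before \cite[Proposition 4.6]{Lion:2009cf} does not obviously give this; what it does give---and what the paper uses---is $\dim d^{M'} < \dim d$ (together with $\deg d^{M'}\le\deg d$).  The paper accordingly inducts on $\dim d$, not on $\dim M$, and then the inductive hypothesis immediately yields $\dim K' < \dim d$ with no residual ``bookkeeping'' of the sort you flag at the end.  Your induction variable forces you to verify a claim about $\dim M'_j$ that may not hold, and leaves the bound $\dim d^{M'_j}\le\dim d$ as an acknowledged loose end; switching to induction on $\dim d$ resolves both at once.
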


\begin{proof}
  Let $(V_\iota)$ be a $T^\infty$-sequence of integral manifolds of
  $d$ such that $K = \lim_\iota V_\iota$.  We proceed by induction on
  $\dim d$.  If $\dim d = 0$, then \cite[Corollary
  3.3(2)]{Lion:2009cf} gives a uniform bound on the cardinality of
  $V_\iota$, so $K$ is finite.  So assume $\dim d > 0$ and the
  corollary holds for lower values of $\dim d$.

  By \cite[Proposition 2.2 and Remark 4.2]{Lion:2009cf}, we may assume
  that $M$ is a definable $C^2$-cell; in particular, there is a
  definable $C^2$-carpeting function $\phi$ on $M$.  For each $\sigma
  \in \Sigma_n$, let $M_{\sigma,2n}$ be as before \cite[Lemma
  1.3]{Lion:2009cf} with $d_k$ in place of $d$.  Then by that lemma,
  $M = \bigcup_{\sigma \in \Sigma} M_{\sigma,2n}$ and each
  $M_{\sigma,2n}$ is an open subset of $M$.  Hence $d$ is compatible
  with each $M_{\sigma,2n}$, and after passing to a subsequence if
  necessary, we may assume that $K_\sigma = \lim_\iota (V_\iota \cap
  M_{\sigma,2n})$ exists for each $\sigma$.  It follows that $K =
  \bigcup_{\sigma \in \Sigma_n} K_\sigma$, so by \cite[Lemma
  1.3(2)]{Lion:2009cf}, after replacing $M$ with each
  $\sigma^{-1}(M_{\sigma,2n})$, we may assume that $d_k$ is
  $2n$-bounded.  Passing to a subsequence again, we may assume that
  $K' := \lim_\iota \fr V_\iota$ exists as well.  Then by Corollary
  \ref{limit_dimension}, the set $K \setminus K'$ is either empty or
  has dimension $\dim d$.  By Proposition \ref{frontier_limit} and the
  discussion before \cite[Proposition 4.6]{Lion:2009cf}, the set $K'$
  is a finite union of $T^\infty$-limits obtained from a definable
  nested distribution $d'$ on a definable manifold $M'$ that satisfies
  $\deg d' \le \deg d$ and $\dim d' < \dim d$.  So $K'$ has dimension
  with $\dim K' < \dim d$ by the inductive hypothesis, and the
  proposition is proved.
\end{proof}

\begin{df} 
  \label{proper_NPL} 
  A $T^\infty$-limit $K \subseteq \RR^n$ obtained from $d$ is
  \textbf{proper} if $\dim K = \dim d$.
\end{df}

\begin{cor}
  \label{proper_limit}
  Let $K \subseteq \RR^n$ be a $T^\infty$-limit obtained from $d$.
  Then $K$ is a finite union of proper $T^\infty$-limits over $\R$ of
  degree at most $\deg d$. 
\end{cor}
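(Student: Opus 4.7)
The plan is to induct on $\dim d$, recycling the reductions used in the proof of Proposition \ref{limit_dim}.

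In the base case $\dim d = 0$, Proposition \ref{limit_dim} gives that $K$ is finite; each singleton in $K$ is trivially a proper $T^\infty$-limit obtained from $d$, of degree at most $\deg d$ by the minimum definition of $\deg K$.

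For the inductive step, fix a $T^\infty$-sequence $(V_\iota)$ of integral manifolds of $d$ with $K = \lim_\iota V_\iota$. If $\dim K = \dim d$, then $K$ is itself a proper $T^\infty$-limit obtained from $d$, and $\deg K \le \deg d$ is immediate. Suppose instead that $\dim K < \dim d$, and perform the same reductions as in the proof of Proposition \ref{limit_dim}: after passing to a subsequence and decomposing $M$ via \cite[Proposition 2.2 and Lemma 1.3]{Lion:2009cf}, I may assume $M$ is a definable $C^2$-cell with a definable carpeting function, that $d_k$ is $2n$-bounded, and that $K' := \lim_\iota \fr V_\iota$ exists. These reductions split $K$ into finitely many pieces, each still obtained from $d$, so it suffices to treat one at a time. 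Corollary \ref{limit_dimension} asserts that $K \setminus K'$ is either empty or has dimension $\dim d$; since $K \setminus K' \subseteq K$ and $\dim K < \dim d$, it must be empty, forcing $K = K'$.

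Now Proposition \ref{frontier_limit}, together with the discussion preceding \cite[Proposition 4.6]{Lion:2009cf}, writes $K = K'$ as a finite union of $T^\infty$-limits obtained from a definable nested distribution $d'$ satisfying $\deg d' \le \deg d$ and $\dim d' < \dim d$. The inductive hypothesis applied to each summand then completes the proof. I do not anticipate a genuine obstacle here; the only bookkeeping concern is that the degree bound survive the induction, which is automatic since passing to subsequences or restricting to open subcells leaves $d$ unchanged, and Proposition \ref{frontier_limit} builds in exactly the required inequality $\deg d' \le \deg d$.
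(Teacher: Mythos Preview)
Your proof is correct and follows essentially the same route as the paper's: induct on $\dim d$, handle the case $\dim K = \dim d$ trivially, and otherwise reduce (as in Proposition~\ref{limit_dim}) to $d_k$ being $2n$-bounded with $K' = \lim_\iota \fr V_\iota$ existing, then use Corollary~\ref{limit_dimension} to force $K = K'$ and finish via Proposition~\ref{frontier_limit} and induction. One minor point: in the base case you need not split $K$ into singletons (which would require arguing that each singleton is itself a $T^\infty$-limit obtained from $d$); since $\dim d = 0$ and $K$ is finite, either $K = \emptyset$ or $\dim K = 0 = \dim d$, so $K$ is already proper.
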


\begin{proof}
  We proceed by induction on $\dim d$; as in the previous proof, we
  assume $\dim d > 0$ and the corollary holds for lower values of
  $\dim d$.  If $\dim K = \dim d$, we are done, so assume that $\dim K
  < \dim d$.  Also as in the previous proof, we now reduce to the case
  where $d_k$ is $2n$-bounded and $K' := \lim_\iota \fr V_\iota$
  exists.  Then Corollary \ref{limit_dimension} implies that $K = K'$,
  so the corollary follows from Proposition \ref{frontier_limit} and
  the inductive hypothesis.
\end{proof}

Finally, $T^\infty$-limits over $\R$ are well behaved with respect to
intersecting with closed definable sets.  To see this, define
$\mathbf{M}:= M \times (0,1)$ and write $(x,\epsilon)$ for the typical
element of $\mathbf{M}$ with $x \in M$ and $\epsilon \in (0,1)$.  We
consider the components of $d$ as distributions on $\mathbf{M}$ in the
obvious way, and we set $\mathbf{d}_0 := g_{\mathbf{M}}$,
$\mathbf{d}_1:= d\epsilon \rest{\mathbf{M}}$ and $\mathbf{d}_{1+i} :=
d_i \cap \mathbf{d}_1$ for $i= 1, \dots, k$ and put $\mathbf{d}:=
(\mathbf{d}_0, \dots, \mathbf{d}_{1+k})$.  Moreover, whenever $e$ is a
core distribution of $d$, we similarly define a corresponding core
distribution $\mathbf{e} = (\mathbf{e}_0, \dots, \mathbf{e}_{1+l})$ of
$\mathbf{d}$.  In this situation, for every Rolle leaf $W$ of $e$ and
every $\epsilon \in (0,1)$, the set $\mathbf{W} := W \times
\{\epsilon\}$ is a Rolle leaf of $\mathbf{e}$.

\begin{prop}  
  \label{intersection_of_NPL}
  Let $K$ be a $T^\infty$-limit obtained from $d$, and let $C
  \subseteq \RR^n$ be a definable closed set.  Then there is a
  definable open subset $\mathbf{N}$ of $\,\mathbf{M}$ and there are
  $q \in \NN$ and $T^\infty$-limits $K_1, \dots, K_q \subseteq
  \RR^{n+1}$ obtained from $\mathbf{d}^{\mathbf{N}}$ such that $K \cap
  C = \Pi_n(K_1) \cup \cdots \cup \Pi_n(K_q)$.
\end{prop}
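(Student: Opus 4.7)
The plan is to lift the problem from $M$ to $\mathbf{M}$ by using the extra coordinate $\epsilon$ as a depth parameter measuring closeness to $C$. Set
\begin{equation*}
  \mathbf{N} := \set{(x,\epsilon) \in \mathbf{M} : \dist(x,C) < \epsilon},
\end{equation*}
a definable open subset of $\mathbf{M}$, and for $\epsilon \in (0,1)$ write $\mathbf{N}_\epsilon := \set{x \in M : (x,\epsilon) \in \mathbf{N}}$, so that $\mathbf{N}_\epsilon$ shrinks to $C \cap M$ as $\epsilon \to 0$. Let $(V_\iota)$ be a $T^\infty$-sequence of integral manifolds of $d$ converging to $K$, with core sequence $(W_\iota)$ corresponding to the core distribution $e$ and definable part $\B$. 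Let $\rho_\iota$ denote the Hausdorff distance between $V_\iota$ and $K$, so $\rho_\iota \to 0$, and choose $\epsilon_\iota \in (0,1)$ with $\rho_\iota < \epsilon_\iota \to 0$. Put
\begin{equation*}
  \mathbf{V}_\iota := (V_\iota \cap \mathbf{N}_{\epsilon_\iota}) \times \set{\epsilon_\iota} \subseteq \mathbf{N};
\end{equation*}
since $\mathbf{d}^{\mathbf{N}}_{1+k}$ restricts on each slice $\set{\epsilon = \epsilon_0}$ to $d^{\mathbf{N}_{\epsilon_0}}_k$, this is an integral manifold of $\mathbf{d}^{\mathbf{N}}_{1+k}$.

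Next I would turn $(\mathbf{V}_\iota)$ into finitely many $T^\infty$-sequences. Since $W \times \set{\epsilon_0}$ is a Rolle leaf of $\mathbf{e}$ for each Rolle leaf $W$ of $e$ and each $\epsilon_0 \in (0,1)$, the components of $(W_\iota \times \set{\epsilon_\iota}) \cap \mathbf{N}$ are Rolle leaves of $\mathbf{e}^{\mathbf{N}}$. By o-minimality of $\R$ applied to the definable family whose fibre at $(W,\epsilon)$ is $(W \cap \mathbf{N}_\epsilon) \times \set{\epsilon}$, the number of such components is bounded uniformly in $\iota$, say by $q$. Correspondingly, $\mathbf{V}_\iota$ decomposes into pieces $\mathbf{V}_\iota^1, \dots, \mathbf{V}_\iota^q$, each an admissible integral manifold of $\mathbf{d}^{\mathbf{N}}$ whose core is one of these components and whose definable part is cut out from $\B$ parametrized by the additional variable $\epsilon$. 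After passing to a subsequence we may assume $\mathbf{V}_\iota^j$ converges to a compact set $K_j \subseteq \RR^{n+1}$ for each $j$, so each $K_j$ is a $T^\infty$-limit obtained from $\mathbf{d}^{\mathbf{N}}$.

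For the equality $K \cap C = \Pi_n(K_1) \cup \cdots \cup \Pi_n(K_q)$, the inclusion ``$\supseteq$'' holds because any point of $K_j$ is a limit of $(x_\iota, \epsilon_\iota)$ with $x_\iota \in V_\iota$ and $\dist(x_\iota, C) < \epsilon_\iota \to 0$, so the limit is $(x,0)$ with $x \in \lim V_\iota = K$ and $x \in C$ by closedness. For ``$\subseteq$'', given $x \in K \cap C$, pick $x_\iota \in V_\iota$ with $|x_\iota - x| \le \rho_\iota < \epsilon_\iota$; then $\dist(x_\iota, C) < \epsilon_\iota$, so $(x_\iota, \epsilon_\iota) \in \mathbf{V}_\iota$ and lies in some $\mathbf{V}_\iota^{j_\iota}$. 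Pigeonholing on $j_\iota$ places $(x,0)$ in some $K_{j_0}$, and $\Pi_n(x,0) = x$.

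The main obstacle is the admissibility bookkeeping in the second step: one must verify that the pieces $\mathbf{V}_\iota^j$ really do form $T^\infty$-sequences of integral manifolds of $\mathbf{d}^{\mathbf{N}}$ in the sense of Section \ref{def}, with core sequences drawn from components of $(W_\iota \times \set{\epsilon_\iota}) \cap \mathbf{N}$ and a single definable family playing the role of the definable part for all $\iota$ simultaneously. This requires a careful trace through \cite[Definition 4.1]{Lion:2009cf} to see that intersecting an admissible integral manifold of $d$ with the open set $\mathbf{N}_{\epsilon_\iota}$ preserves admissibility (after splitting into uniformly boundedly many components), and building the new definable family uniformly in the parameter $\epsilon$ from the original $\B$.
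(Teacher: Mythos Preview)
Your argument follows essentially the same route as the paper: define $\mathbf{N} = \{(x,\epsilon) : \dist(x,C) < \epsilon\}$, pick $\epsilon_\iota \to 0$ dominating the rate of convergence of $(V_\iota)$ to $K$, and realise $K \cap C$ as the projection of the Hausdorff limit of $(V_\iota \cap \mathbf{N}_{\epsilon_\iota}) \times \{\epsilon_\iota\}$. Your direct verification of the equality $K \cap C = \Pi_n(\lim_\iota \mathbf{V}_\iota)$ is in fact a bit more streamlined than the paper's, which first passes through auxiliary limits $K_\epsilon = \lim_\kappa (V_{\iota(\kappa)} \cap T(C,\epsilon))$ at fixed $\epsilon$ before letting $\epsilon \to 0$.

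One point does need correction. The bound $q$ on the number of components of $(W_\iota \times \{\epsilon_\iota\}) \cap \mathbf{N}$ cannot be obtained ``by o-minimality of $\R$ applied to the definable family whose fibre at $(W,\epsilon)$ is $(W \cap \mathbf{N}_\epsilon) \times \{\epsilon\}$'': Rolle leaves $W$ are not parameters of a definable family in $\R$ --- this is precisely the difficulty highlighted in Remark~(3) of Section~\ref{def}. The uniform component bound, and indeed the entire admissibility bookkeeping you correctly flag as the main obstacle, are dispatched in the paper by a single appeal to \cite[Remark~4.2]{Lion:2009cf}: one first observes that $(V_{\iota} \times \{\epsilon_{\iota}\})$ is already a $T^\infty$-sequence of integral manifolds of $\mathbf{d}$ on all of $\mathbf{M}$, and then that remark supplies, uniformly, the decomposition into finitely many $T^\infty$-sequences for $\mathbf{d}^{\mathbf{N}}$ upon restriction to the open set $\mathbf{N}$. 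Invoking this replaces your hands-on component analysis and closes the gap you describe.
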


\begin{proof}[Sketch of proof]
  For $\epsilon > 0$ put $T(C,\epsilon):= \set{x \in \RR^n:\ d(x,C) <
    \epsilon}$.  Note first that $K \cap C = \bigcap_{\epsilon > 0}
  \big(K \cap T(C,\epsilon)\big)$, and the latter is equal to
  $\lim_{\epsilon \to 0} \big(K \cap T(C,\epsilon)\big)$ in the sense
  of \cite[Definition 1.7]{Lion:2009cf}.  Next, let $(V_\iota)$ be a
  $T^\infty$-sequence of integral manifolds of $d$ such that $K =
  \lim_\iota V_\iota$.  Then for every $\epsilon > 0$, there is a
  subsequence $(\iota(\kappa))$ of $(\iota)$ such that the sequence
  $(V_{\iota(\kappa)} \cap T(C,\epsilon))$ converges to some compact
  set $K_\epsilon$.  Note that $K_\epsilon \cap T(C,\epsilon) = K \cap
  T(C,\epsilon)$, since $T(C,\epsilon)$ is an open set.

  Fix a sequence $(\epsilon_\kappa)$ of positive real numbers
  approaching $0$, and for each $\kappa$, choose $\iota(\kappa)$ such
  that $d(V_{\iota(\kappa)} \cap T(C,\epsilon_\kappa),
  K_{\epsilon_\kappa}) < \epsilon_\kappa$. Passing to a subsequence if
  necessary, we may assume that $\lim_\kappa K_{\epsilon_\kappa}$
  and $\lim_\kappa \big(V_{\iota(\kappa)} \cap
  T(C,\epsilon_\kappa)\big)$ exist; note that these limits are then
  equal.  Hence by the above, 
  \begin{align*}
    K \cap C &= \lim_\kappa \big(K \cap T(C,\epsilon_\kappa)\big) =
    \lim_\kappa \big(K_{\epsilon_\kappa} \cap
    T(C,\epsilon_\kappa)\big) \\ &\subseteq \lim_\kappa
    K_{\epsilon_\kappa} = \lim_\kappa \big(V_{\iota(\kappa)} \cap
    T(C,\epsilon_\kappa)\big).
  \end{align*}
  The reverse inclusion is obvious, so $K \cap C = \lim_\kappa
  \big(V_{\iota(\kappa)} \cap T(C,\epsilon_\kappa)\big)$.  Therefore,
  put $\mathbf{N}:= \set{(x,\epsilon) \in \mathbf{M}:\ d(x,C) <
    \epsilon}$; then $\mathbf{N}$ is an open, definable subset of
  $\mathbf{M}$ and by the above $K \cap C = \lim_\kappa
  (V_{\iota(\kappa)} \cap \mathbf{N}^{\epsilon_\kappa})$, where
  $\mathbf{N}^{\epsilon}:= \set{x \in M:\ (x,\epsilon) \in
    \mathbf{N}}$.  Hence $K \cap C = \lim_\kappa
  \Pi_n\big((V_{\iota(\kappa)} \times \{\epsilon_\kappa\}) \cap
  \mathbf{N}\big)$. Since $\lim_\kappa\epsilon_\kappa = 0$, it follows
  that $K \cap C = \Pi_n \big( \lim_\kappa \big((V_{\iota(\kappa)}
  \times \{\epsilon_\kappa\}) \cap \mathbf{N}\big)\big)$.  Since the
  sequence $\big(V_{\iota(\kappa)} \times \{\epsilon_\kappa\}\big)$ is
  a $T^\infty$-sequence of integral manifolds of $\mathbf{d}$, the
  proposition now follows from \cite[Remark 4.2]{Lion:2009cf}.
\end{proof}

\begin{nrmk}
  \label{uniform_intersection}
  Let $\B$ and $\C$ be two definable families of closed subsets of
  $\RR^n$.  Then the $T^\infty$-limits in the previous proposition
  depend uniformly on $C \in \C$, for all $T^\infty$-limits obtained
  from $d$ with definable part $\B$.  That is, there are $\mu,q \in
  \NN$, a bounded, definable manifold $\mathbf{M} \subseteq
  \RR^{n+\mu+1}$, a definable nested distribution $\mathbf{d}$ on
  $\mathbf{M}$ and a definable family $\mathbf{B}$ of subsets of
  $\RR^{n+\nu+1}$ such that whenever $K$ is a $T^\infty$-limit
  obtained from $d$ with definable part $\B$ and $C \in \C$, there are
  $T^\infty$-limits $K_1, \dots, K_q \subseteq \RR^{n+\nu+1}$ obtained
  from $\mathbf{d}$ with definable part $\mathbf{B}$ such that $K \cap
  C = \Pi_n(K_1) \cup \cdots \cup \Pi_n(K_q)$.  
\end{nrmk}

\section{O-minimality and proof of Theorem A}
\label{omin}

Similar to \cite{Lion:1998ay, Speissegger:1999nt}, we show that all
sets definable in $T^\infty(\R)$ are of the following form:

\begin{df}
  \label{lambda-set}
  A set $X \subseteq \RR^m$ is a \textbf{basic $T^\infty$-set} if
  there exist $n \ge m$, a definable, bounded $C^2$-manifold $M
  \subseteq \RR^n$, a definable nested distribution $d$ on $M$ with
  core distribution $e$ and, for $\kappa \in \NN$, a
  $T^\infty$-sequence $(V_{\kappa,\iota})_\iota$ of integral manifolds
  of $d$ with core sequence $(W_{\kappa,\iota})_{\iota}$ corresponding
  to $e$ and definable part $\B$ independent of $\kappa$, such that:
  \begin{renumerate}
  \item for each $\kappa$, the limit $K_\kappa:= \lim_\iota
    V_{\kappa,\iota}$ exists in $\K_n$;
  \item the sequence $(\Pi_m(K_\kappa))_\kappa$ is increasing and has
    union $X$.
  \end{renumerate}
  In this situation, we say that $X$ is \textbf{obtained from $d$}
  with \textbf{core distribution} $e$ and \textbf{definable part}
  $\B$.  A \textbf{$T^\infty$-set} is a finite union of basic
  $T^\infty$-sets.  We denote by $T^\infty_m$ the collection of all
  $T^\infty$-sets in $\RR^m$ and put $T^\infty := (T^\infty_m)_{m \in
    \NN}$.
\end{df}

\begin{prop}
  \label{finite_comp}
  In the situation of Definition \ref{lambda-set}, there is an $N \in
  \NN$ such that every basic $T^\infty$-set obtained from $d$ with
  core distribution $e$ and definable part $\B$ has at most $N$
  components.  In particular, if $X \subseteq \RR^m$ is a
  $T^\infty$-set and $l \le m$, there is an $N \in \NN$ such that for
  every $a \in \RR^l$ the fiber $X_a$ has at most $N$ components.
\end{prop}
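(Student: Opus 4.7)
The plan is to first bound the number of connected components of the admissible integral manifolds $V_{\kappa,\iota}$ uniformly in $(\kappa,\iota)$, then propagate this bound successively through the Hausdorff limits $K_\kappa$, the continuous projection $\Pi_m$, and the increasing union over $\kappa$. The fiber statement is then obtained by reducing to the first part via Remark \ref{uniform_intersection}.

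For the uniform bound on the $V_{\kappa,\iota}$, recall that by \cite[Definition 4.1]{Lion:2009cf} each $V_{\kappa,\iota}$ is a union of connected components of $B \cap W_l$, where $B \in \B$ and $W = (W_0, \dots, W_l)$ is a nested Rolle leaf of $e$. The standard Khovanskii-type argument in the o-minimal setting, obtained by iterating pfaffian fiber cutting (Lemma \ref{cell_cutting}) through the nesting levels of $e$ with the definable family $\B$, yields a single $N_0 \in \NN$ bounding the number of components of $B \cap W_l$ uniformly over $B \in \B$ and over all nested Rolle leaves $W$ of $e$; the foundational version of this component-counting is carried out in \cite{Speissegger:1999nt}.

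To propagate, fix $\kappa$ and pass to a subsequence in $\iota$ so that the number of components of $V_{\kappa,\iota}$ is a constant $p_\kappa \le N_0$; by Blaschke selection, pass to a further subsequence so that each of the $p_\kappa$ components converges separately in $\K_n$ to a compact set. Each such limit is connected as the Hausdorff limit of connected compact sets, so $K_\kappa$ and hence its continuous image $\Pi_m(K_\kappa)$ has at most $N_0$ components. Since $(\Pi_m(K_\kappa))_\kappa$ is an increasing sequence of sets each with at most $N_0$ components, a straightforward directed-colimit argument applies: any $N_0+1$ distinct components of $X$, via representative points, would lie inside a single $\Pi_m(K_\kappa)$ with $\kappa$ sufficiently large and yield $N_0+1$ distinct components there, a contradiction. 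Hence $X$ has at most $N_0$ components. For the ``in particular'' statement, write the $T^\infty$-set $X$ as a finite union of basic $T^\infty$-sets $X_i = \bigcup_\kappa \Pi_m(K^i_\kappa)$, and apply Remark \ref{uniform_intersection} to each $K^i_\kappa$ with the definable family $\C = \set{\{a\} \times \RR^{m-l} : a \in \RR^l}$; this realizes $(X_i)_a$ uniformly in $a$ as a finite union, of bounded cardinality, of basic $T^\infty$-sets obtained from common data $(\mathbf{d}^i, \mathbf{e}^i, \mathbf{B}^i)$, to which the first part of the proposition gives a uniform component bound.

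The main obstacle is the first step: extracting the uniform bound $N_0$ for $B \cap W_l$ over a fixed definable family $\B$ and all nested Rolle leaves of $e$. Although folklore in this area, it requires a careful iterated pfaffian fiber cutting together with the Rolle property at each nesting level; the rest of the argument is soft and relies only on elementary topology of Hausdorff limits, continuity of $\Pi_m$, and the directed-colimit behaviour of components under increasing unions.
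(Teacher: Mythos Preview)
Your proposal is correct and follows essentially the same approach as the paper: obtain a uniform bound $N$ on the number of components of $W \cap B$ over all Rolle leaves $W$ of $e$ and all $B \in \B$, propagate this bound through the Hausdorff limits $K_\kappa$, the projection $\Pi_m$, and the increasing union, and then invoke Remark~\ref{uniform_intersection} for the fiber statement. The paper's proof is simply much terser, taking the existence of the uniform bound $N$ as known from \cite{Lion:2009cf} and asserting the propagation steps without the subsequence, Blaschke-selection, and directed-colimit justifications you spell out.
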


\begin{proof}
  Let $N$ be a bound on the number of components of the sets $W \cap
  B$ as $W$ ranges over all Rolle leaves of $e$ and $B$ ranges over
  $\B$.  Let $X$ be a basic $T^\infty$-set as in Definition
  \ref{lambda-set}.  Then each $V_{\kappa,\iota}$ has at most $N$
  components, so each $K_\kappa$ has at most $N$ components, and hence
  $X$ has at most $N$ components.  Combining this observation with
  Remark \ref{uniform_intersection} yields, for every $T^\infty$-set
  $X \subseteq \RR^m$, a uniform bound on the number of connected
  components of the fibers of $X$.
\end{proof}

\begin{prop}
  \label{definable_sets}
  \begin{enumerate}
  \item Any coordinate projection of a $T^\infty$-limit over $\R$ is a
    $T^\infty$-set.
  \item Every bounded definable set is a $T^\infty$-set.
  \item Let $d$ be a definable nested distribution on $M:= (-1,1)^n$
    and $L$ be a Rolle leaf of $d$.  Then $L$ is a $T^\infty$-set.
  \end{enumerate}
\end{prop}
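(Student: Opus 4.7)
For (1), the argument is immediate from Definition \ref{lambda-set}: given a $T^\infty$-limit $K \subseteq \RR^n$ obtained from $d$ via a $T^\infty$-sequence $(V_\iota)_\iota$ converging to $K$, I set $V_{\kappa,\iota} := V_\iota$ independently of $\kappa$. Then $K_\kappa = K$ for every $\kappa$, so $(\Pi_m(K_\kappa))_\kappa$ is constant (and so trivially increasing) with union $\Pi_m(K)$; this exhibits $\Pi_m(K)$ as a basic $T^\infty$-set.

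For (2), my plan is to $C^2$-cell decompose the bounded definable set $A \subseteq \RR^m$ and reduce to the case of a single cell $C$. Using a definable $C^2$-parametrisation, I write $C = \bigcup_\kappa C_\kappa$ as an increasing union of compact definable subcells $C_\kappa$ (the images of concentric closed sub-boxes), chosen so that they belong to a single definable family $\B$. Each $C_\kappa$ is then realised as a trivial $T^\infty$-limit by a constant-in-$\iota$ sequence on a bounded definable $C^2$-manifold $M$ containing $C$, using the trivial nested distribution $d := (g_M)$, core $e := d$, definable part $\B$, and $V_{\kappa,\iota} := C_\kappa$. The increasing union $\bigcup_\kappa C_\kappa = C$ then displays $C$, and hence $A$, as a $T^\infty$-set by (1).

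For (3), I fix a base point $x_0 \in L$ and set $B_t := [-1+t, 1-t]^n$ for $t \in (0,1)$, giving the definable family $\B := \set{B_t : t \in (0,1)}$ of compact subsets of $M$. For each $\kappa \in \NN$ with $x_0 \in B_{1/\kappa}$---which I may assume for every $\kappa$ after shifting the indexing---let $V_\kappa$ denote the unique connected component of $L \cap B_{1/\kappa}$ containing $x_0$. Each $V_\kappa$ is a single admissible integral manifold of $d$ with core $L$ (a Rolle leaf of the core distribution $e := d$) and definable part $B_{1/\kappa} \in \B$. Taking $W_{\kappa,\iota} := L$ and $V_{\kappa,\iota} := V_\kappa$ constantly in $\iota$ then yields a $T^\infty$-sequence with $K_\kappa = V_\kappa$. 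Monotonicity of $(V_\kappa)_\kappa$ follows from $L \cap B_{1/\kappa} \subseteq L \cap B_{1/(\kappa+1)}$ together with both sets containing $x_0$. To see $\bigcup_\kappa V_\kappa = L$, join any $y \in L$ to $x_0$ by a path $\gamma$ inside the connected (hence path-connected) $C^2$-manifold $L$: the image of $\gamma$ is a compact subset of the open $M$, so lies in $B_{1/\kappa}$ for $\kappa$ large, and being a connected subset of $L \cap B_{1/\kappa}$ through $x_0$ it lies in $V_\kappa$. This realises $L$ as a basic $T^\infty$-set.

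The main obstacle is the setup in (3): the naive choice $V_\kappa := L \cap B_{1/\kappa}$ can fail to be connected and thereby fail to be a single admissible integral manifold, even though Proposition \ref{finite_comp} bounds the number of its components uniformly. The decisive step is to replace $L \cap B_{1/\kappa}$ by its unique component through a fixed base point $x_0$, after which path-connectedness of the Rolle leaf $L$ forces these base-point components to sweep out all of $L$ in the limit.
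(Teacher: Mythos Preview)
Part (1) is fine and matches the paper.

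Parts (2) and (3), however, share a genuine gap: your choices for the definable parts are not admissible. Recall from \cite[Definition~4.1]{Lion:2009cf} that the definable part $B \in \B$ of an admissible integral manifold must be a \emph{closed integral manifold} of $d_{k-l}$, that is, a closed embedded (boundaryless) submanifold of $M$ whose tangent spaces coincide with $d_{k-l}$. In your (2) you take $d = (g_M)$ and $e = d$, so $l = k = 0$ and $d_{k-l} = g_M$; a closed integral manifold of $g_M$ is then a clopen subset of $M$, hence (for connected $M$) only $M$ itself. Your compact subcells $C_\kappa$, being images of closed sub-boxes, have corners and are not integral manifolds of $g_M$ at all. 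The same objection applies to your closed boxes $B_{1/\kappa}$ in (3): with $e = d$ you again need closed integral manifolds of $d_0 = g_M$, and $[-1+1/\kappa,\,1-1/\kappa]^n$ is neither open in $(-1,1)^n$ nor a submanifold of it. Consequently the sequences $(V_{\kappa,\iota})$ you write down are not $T^\infty$-sequences in the sense of Section~\ref{def}, and replacing the closed boxes by open ones does not help either, since those fail to be closed in $M$.

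The paper circumvents this obstruction by \emph{adding an extra parameter dimension}. For (2), one passes from $C$ to $\mathbf{C} := \{(x,r) \in C \times (0,1) : \phi(x) > r\}$ (with $\phi$ a definable carpeting function on $C$) and uses the nested distribution $\mathbf{d} = (g_{\mathbf{C}},\, \ker dr\rest{\mathbf{C}})$; then each fibre $\mathbf{C}^r = \phi^{-1}((r,\infty)) \times \{r\}$ is an honest closed integral manifold of $\mathbf{d}_1$ (boundaryless and closed in $\mathbf{C}$), and these fibres form the required definable family. The same device, with the $d_i$ adjoined via $\mathbf{d}_{1+i} := \ker dr \cap d_i$, handles (3). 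The connectedness issue you flag at the end is real but secondary; the decisive idea you are missing is this passage to $M \times (0,1)$, which turns the exhausting family into a family of \emph{fibres} rather than of truncations with boundary.
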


\begin{proof}
  (1) is obvious.  For (2), let $C \subseteq \RR^n$ be a bounded,
  definable cell.  By cell decomposition, it suffices to show that $C$
  is a $T^\infty$-set.  Let $\phi$ be a definable carpeting function on
  $C$.  Then $C = \bigcup_{i = 1}^\infty
  \cl\big(\phi^{-1}((1/i,\infty))\big)$, so let $\mathbf{C} :=
  \set{(x,r) \in C \times (0,1):\ \phi(x)>r}$ and put $\mathbf{d}_1:=
  \ker dr \rest{\mathbf{C}}$ and $\mathbf{d}:= (g_{\mathbf{C}},
  \mathbf{d}_1)$.  Then for $r>0$, the set $\mathbf{C}^r =
  \phi^{-1}((r,\infty)) \times \{r\}$ is an admissible integral
  manifold of $\mathbf{d}$ with core $\mathbf{C}$ and definable
  part $\mathbf{C}^r$, so $\cl(\mathbf{C}^r)$ is a $T^\infty$-limit
  obtained from $\mathbf{d}$.

  (3) Let $\phi$ be a carpeting function on $M$.  Then $$L =
  \bigcup_{i=1}^\infty \cl\big(L \cap \phi^{-1}((1/i,\infty))\big),$$
  so we let $\mathbf{M} := \set{(x,r) \in M \times (0,1):\ \phi(x)>r}$
  and put $\mathbf{d}_0:= g_{\mathbf{M}}$, $\mathbf{d}_1 := \ker dr
  \rest{\mathbf{M}}$, $\mathbf{d}_{1+i} := \mathbf{d}_1 \cap d_i$ for
  $i=1, \dots, k$ and $\mathbf{d} := (\mathbf{d}_0,
  \dots,\mathbf{d}_{1+k})$.  Let $L_1, \dots, L_q$ be the components
  of $(L \times (0,1)) \cap \mathbf{M}$; note that each $L_p$ is a
  Rolle leaf of $\mathbf{d}$.  Thus for $r>0$ and each $p$, the set
  $L_p \cap \phi^{-1}((r,\infty))$ is an admissible integral manifold
  of $\mathbf{d}$ with core $L_p$ and definable part $\mathbf{M}^r
  = \phi^{-1}((r,\infty)) \times \{r\}$.
\end{proof}

\begin{prop}
  \label{closure_properties}
  The collection of all $T^\infty$-sets is closed under taking finite
  unions, finite intersections, coordinate projections, cartesian
  products, permutations of coordinates and topological closure.
\end{prop}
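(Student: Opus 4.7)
The plan is to verify the six closure properties one by one, handling the easy cases (unions, permutations, projections, products) quickly and concentrating on finite intersections and topological closure. Finite unions are immediate from the definition. For a permutation $\sigma$ of $\{1,\dots,m\}$, I would extend $\sigma$ to $\tilde\sigma \in S_n$ by the identity on the remaining coordinates and push the defining data $(M,d,e,\B,(V_{\kappa,\iota}))$ of a basic $T^\infty$-set $X$ through $\tilde\sigma$. Coordinate projections $\Pi_{m'}$ with $m' \le m$ reuse the same data with $m'$ in place of $m$ (since $\Pi_{m'}(K_\kappa)$ is automatically increasing whenever $\Pi_m(K_\kappa)$ is); combined with permutations, this yields arbitrary coordinate projections. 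Cartesian products $X_1 \times X_2$ of basic $T^\infty$-sets are realized by taking product nested distributions on $M_1 \times M_2$, product core distributions, product definable parts $\B_1 \times \B_2$, and the pointwise product sequences $V_{1,\kappa,\iota} \times V_{2,\kappa,\iota}$, after permuting coordinates to group the first-$m_i$ ones.

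For finite intersections of $T^\infty$-sets $X_1, X_2 \subseteq \RR^m$, I would use the identity $X_1 \cap X_2 = \Pi_m((X_1 \times X_2) \cap \Delta)$ with $\Delta \subseteq \RR^{2m}$ the diagonal (a closed definable set), reducing via the product and projection cases to showing that the intersection of a basic $T^\infty$-set $X = \bigcup_\kappa \Pi_m(K_\kappa)$ with a closed definable $C \subseteq \RR^m$ is a $T^\infty$-set. Writing $X \cap C = \bigcup_\kappa \Pi_m(K_\kappa \cap (C \times \RR^{n-m}))$ and applying the uniform version (Remark \ref{uniform_intersection}) of Proposition \ref{intersection_of_NPL}, each $K_\kappa \cap (C \times \RR^{n-m})$ decomposes as $\Pi_n(K_{\kappa,1}) \cup \cdots \cup \Pi_n(K_{\kappa,q})$, with all $K_{\kappa,j}$ obtained from a single $(\mathbf{d},\mathbf{B})$. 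Grouping by $j$ yields $X \cap C = \bigcup_{j=1}^q Y_j$ where $Y_j := \bigcup_\kappa \Pi_m(K_{\kappa,j})$, and each $Y_j$ is to be exhibited as a basic $T^\infty$-set by passing to the cumulative envelopes $L_{N,j} := \bigcup_{\kappa \le N} K_{\kappa,j}$, realized as Hausdorff limits of the finite unions $\bigcup_{\kappa \le N} V_{\kappa,\iota,j}$ of admissible integral manifolds of $\mathbf{d}$.

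For topological closure, I would use a diagonal sequence argument. Since $M$ is bounded, the whole construction lies in a fixed compact set and $\cl X$ is compact. Pick $\iota(\kappa)$ so that the Hausdorff distance from $V_{\kappa,\iota(\kappa)}$ to $K_\kappa$ is less than $1/\kappa$; projection is $1$-Lipschitz, so the same bound holds between $\Pi_m(V_{\kappa,\iota(\kappa)})$ and $\Pi_m(K_\kappa)$. The increasing sequence of compact sets $\Pi_m(K_\kappa) \subseteq \cl X$ has dense union $X$, so a standard compactness argument shows $\Pi_m(K_\kappa) \to \cl X$ in the Hausdorff metric, whence $\Pi_m(V_{\kappa,\iota(\kappa)}) \to \cl X$ as well. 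By Blaschke's selection theorem, I pass to a subsequence along which $(V_{\kappa,\iota(\kappa)})_\kappa$ itself Hausdorff-converges to some compact $K^* \subseteq \cl M$; this diagonal sequence is a $T^\infty$-sequence of integral manifolds of $d$ with core sequence $(W_{\kappa,\iota(\kappa)})_\kappa$ of Rolle leaves of $e$, so $K^*$ is a $T^\infty$-limit with $\Pi_m(K^*) = \cl X$, which exhibits $\cl X$ as a basic $T^\infty$-set via the constant sequence $K^*$.

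The main obstacle is the intersection step, specifically showing that the cumulative envelopes $L_{N,j}$ --- finite unions of $T^\infty$-limits from a common $(\mathbf{d},\mathbf{B})$ but with distinct core Rolle leaves --- fit into the $T^\infty$-sequence framework of Definition \ref{lambda-set}, so that $Y_j = \bigcup_N \Pi_m(L_{N,j})$ genuinely qualifies as a basic $T^\infty$-set. The uniformity guaranteed by Remark \ref{uniform_intersection} --- a single $(\mathbf{d},\mathbf{B})$ serving every $\kappa$ --- is essential, since without it the countably many pieces produced by Proposition \ref{intersection_of_NPL} could not be reassembled into finitely many basic $T^\infty$-sets at all.
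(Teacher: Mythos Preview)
Your treatment of finite unions, permutations, projections, cartesian products, and topological closure is essentially the paper's argument (the paper compresses the closure step into the single line $\cl(X)=\Pi_m(\lim_\kappa V_{\kappa,\iota(\kappa)})$, but your diagonalization is exactly what underlies it).

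The intersection step, however, is where your proposal diverges and remains incomplete. Your cumulative envelopes $L_{N,j}=\bigcup_{\kappa\le N}K_{\kappa,j}$ do give an increasing sequence, but---as you yourself flag---they are limits of finite unions $\bigcup_{\kappa\le N}V_{\kappa,\iota,j}$ whose summands carry \emph{different} core Rolle leaves $W_{\kappa,\iota}$. A $T^\infty$-sequence in the sense of Section~\ref{def} requires each term to be an admissible integral manifold with a \emph{single} core Rolle leaf of $e$; a disjoint union of $N$ such manifolds with $N$ distinct cores is not of this form, so $L_{N,j}$ is not exhibited as a $T^\infty$-limit and $(L_{N,j})_N$ does not witness $Y_j$ as a basic $T^\infty$-set under Definition~\ref{lambda-set}. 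You identify this as ``the main obstacle'' but do not resolve it, so the proposal as written has a genuine gap.

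The paper avoids this problem by not taking cumulative unions at all. After invoking Proposition~\ref{intersection_of_NPL} to write $K_\kappa\cap C'=\Pi_n(K^1_\kappa)\cup\cdots\cup\Pi_n(K^q_\kappa)$, it replaces each sequence $(K^j_\kappa)_\kappa$ by a (possibly finite) subsequence so that $(\Pi_m(K^j_\kappa))_\kappa$ is itself increasing; then $X_j:=\bigcup_\kappa\Pi_m(K^j_\kappa)$ is directly a basic $T^\infty$-set with the original single-core data, and $X\cap C=X_1\cup\cdots\cup X_q$. This keeps every $K^j_\kappa$ as a genuine $T^\infty$-limit and sidesteps the multiple-core issue entirely.
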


\begin{proof}
  Closure under taking finite unions, coordinate projections and
  permutations of coordinates is obvious from the definition and the
  properties of nested pfaffian sets over $\R$.  

  For topological closure, let $X \subseteq \RR^m$ be a basic
  $T^\infty$-set with associated data as in Definition
  \ref{lambda-set}.  Then $$\cl(X) = \lim_\kappa \Pi_m(K_\kappa) =
  \Pi_m(\lim_\kappa \lim_\iota V_{\kappa,\iota}) = \Pi_m(\lim_\kappa
  V_{\kappa,\iota(\kappa)})$$ for some subsequence
  $(\iota(\kappa))_\kappa$, so $\cl(X)$ is a $T^\infty$-set by
  Proposition \ref{definable_sets}(1).

  For cartesian products, let $X_1 \subseteq \RR^{m_1}$ and $X_2
  \subseteq \RR^{m_2}$ be basic $T^\infty$-sets, and let $M^i \subseteq
  \RR^{n_i}$, $d^i = (d^i_0, \dots, d^i_{k^i})$, $e^i = (e^i_0, \dots,
  e^i_{l^i})$ and $\big(V^i_{\iota,\kappa}\big)$ be the data
  associated to $X_i$ as in Definition \ref{lambda-set}, for $i=1,2$.
  We assume that both $M^1$ and $M^2$ are connected; the general case
  is easily reduced to this situation.  Define
  \begin{equation*}
    \mathbf{M} := \set{(x,y,u,v):\ (x,u) \in M^1 \text{ and } (y,v)
      \in M^2},
  \end{equation*}
  where $x$ ranges over $\RR^{m_1}$, $y$ over $\RR^{m_2}$, $u$ over
  $\RR^{n_1-m_1}$ and $v$ over $\RR^{n_2-m_2}$.  We interpret $d^i$
  and $e^i$ as sets of distributions on $\mathbf{M}$ correspondingly,
  for $i=1,2$, and we define $\mathbf{d}:= (d^1_0, \dots, d^1_{k^1},
  d^1_{k^1} \cap d^2_1, \dots, d^1_{k^1} \cap d^2_{k^2})$ and
  $\mathbf{e}:= (e^1_0, \dots, e^1_{l^1}, e^1_{l^1} \cap e^2_1, \dots,
  e^1_{l^1} \cap e^2_{l^2})$.  Since $M^1$ and $M^2$ are connected,
  each set $$V_{\kappa,\iota}:= \set{(x,y,u,v):\ (x,u) \in
    V^1_{\kappa,\iota} \text{ and } (y,v) \in V^2_{\kappa,\iota}}$$ is
  an admissible integral manifold of $\mathbf{d}$ with core
  distribution $\mathbf{e}$.  It is now easy to see that for each
  $\kappa$, the limit $K_\kappa:= \lim_\iota V_{\kappa,\iota}$ exists
  in $\K_{n_1+n_2}$, and that the sequence
  $\big(\Pi_{k_1+k_2}(K_\kappa)\big)$ is increasing and has union $X_1
  \times X_2$.

  For intersections, let $X_1, X_2 \subseteq \RR^m$ be basic
  $T^\infty$-sets.  Then $X_1 \cap X_2 = \Pi_k((X_1 \times X_2) \cap
  \Delta)$, where $\Delta:= \set{(x,y) \in \RR^m \times \RR^m:\ x_i =
    y_i \text{ for } i=1, \dots, m}$.  Therefore, we let $X \subseteq
  \RR^m$ be a basic $T^\infty$-set and $C \subseteq \RR^m$ be closed
  and definable, and we show that $X \cap C$ is a $T^\infty$-set.  Let
  the data associated to $X$ be as in Definition \ref{lambda-set}, and
  let $\mathbf{M}$, $\mathbf{d}$ and $\mathbf{e}$ be associated to
  that data as before Proposition \ref{intersection_of_NPL}.  Let also
  $\mathbf{N}$ be the open subset of $\mathbf{M}$ given by that
  proposition with $C':= C \times \RR^{n-m}$ in place of $C$.  Then by
  that proposition, there is a $q \in \NN$ such that for every
  $\kappa$ the set $K_\kappa \cap C'$ is the union of the projections
  of $T^\infty$-limits $K^1_\kappa, \dots, K^q_\kappa$ obtained from
  $\mathbf{d}^{\mathbf{N}}$.  Note that each $K^j_\kappa$ is the limit
  of a $T^\infty$-sequence of integral manifolds of
  $\mathbf{d}^{\mathbf{N}}$ with core distribution
  $\mathbf{e}^{\mathbf{N}}$.  Replacing each sequence
  $\big(K^j_\kappa\big)$ by a (possibly finite) subsequence if
  necessary, we may assume that each sequence
  $\big(\Pi_m(K^j_\kappa)\big)$ is increasing.  Then each $X_j:=
  \bigcup_\kappa K^j_\kappa$ is a basic $T^\infty$-set and $X \cap C =
  X_1 \cup \cdots \cup X_q$.
\end{proof}

\begin{prop}
  \label{boundary}
  Let $X \subseteq \RR^m$ be a $T^\infty$-set.  Then $\bd(X)$ is
  contained in a closed $T^\infty$-set with empty interior.
\end{prop}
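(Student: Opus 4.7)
The plan is to induct on $\dim d$, where $d$ is the nested distribution realizing $X$ as a basic $T^\infty$-set. By Proposition \ref{closure_properties} and Baire category, a finite union of closed $T^\infty$-sets with empty interior is again such a set, so I may assume $X$ is basic with data $(M, d, e, \B, (V_{\kappa,\iota}), K_\kappa)$ as in Definition \ref{lambda-set}; by Corollary \ref{proper_limit}, I may further assume each $K_\kappa$ is proper, so $\dim K_\kappa = \dim d$. The base case $\dim d = 0$ is immediate, since then the $V_{\kappa,\iota}$ have uniformly bounded cardinality by \cite[Corollary 3.3(2)]{Lion:2009cf} (as recalled in the proof of Proposition \ref{limit_dim}), so $X$ is finite and definable, and $\bd(X)\subseteq X$ is the required set.

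For the inductive step, first consider the easy subcase $\dim d < m$. Proposition \ref{limit_dim} gives $\dim \Pi_m(K_\kappa) \le \dim d < m$ for every $\kappa$, and the diagonal subsequence construction used in the closure case of Proposition \ref{closure_properties} exhibits $\cl(X) = \Pi_m(\tilde K)$ for a $T^\infty$-limit $\tilde K$ obtained from $d$; another application of Proposition \ref{limit_dim} then gives $\dim \cl(X) \le \dim d < m$, so $\cl(X)$ itself is the desired closed $T^\infty$-set containing $\bd(X)$.

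The main case is $\dim d \ge m$. I would apply Corollary \ref{cell_fiber_cutting} to $M$ with the distribution $d$ (and target $m$), obtaining a partition $\P = \P_{\mathrm{full}} \sqcup \P_{\mathrm{low}}$ of $M$ into $C^2$-cells on which $\Pi_m\rest{(N \cap V_{\kappa,\iota})}$ has constant rank $=m$ (a local diffeomorphism) or constant rank $<m$, respectively. On each $N\in\P_{\mathrm{low}}$, the sequence $(N\cap V_{\kappa,\iota})_\iota$ is a $T^\infty$-sequence for the restricted distribution $d^N$ with $\dim d^N < m$, so after a diagonal subsequence in $\kappa$ its contribution projects into a closed $T^\infty$-set of dimension $<m$. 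On each $N\in\P_{\mathrm{full}}$, $\Pi_m(N\cap V_{\kappa,\iota})$ is open in $\RR^m$ and
\begin{equation*}
  \bd\bigl(\Pi_m(N \cap V_{\kappa,\iota})\bigr) \subseteq \Pi_m\bigl(\cl V_{\kappa,\iota} \cap \fr N\bigr) \cup \Pi_m\bigl(\cl N \cap \fr V_{\kappa,\iota}\bigr);
\end{equation*}
the first piece is controlled in the limit via Proposition \ref{intersection_of_NPL} together with the fact that $\fr N$ is definable of dimension $<\dim N$, and the second piece is a finite union of projections of $T^\infty$-limits obtained from distributions $d'$ with $\dim d'<\dim d$ by Proposition \ref{frontier_limit}, to which the inductive hypothesis applies.

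To conclude from the decomposition $\bd(X)\subseteq\bigcup_\kappa\bd(\Pi_m(K_\kappa))\cup(\cl(X)\setminus X)$, the first term is handled by the case analysis above. The main obstacle I anticipate is the second term $\cl(X)\setminus X$: realizing $\cl(X)=\Pi_m(\tilde K)$ through the diagonal $\kappa$-limit, I expect that escape from every $K_\kappa$ forces the excess points to arise from $\fr\tilde K$, which by a second application of Proposition \ref{frontier_limit} yields $T^\infty$-limits from distributions of strictly smaller $\dim d$. The delicate part is bookkeeping the core sequences and definable parts across both the $\iota$-limits (per fixed $\kappa$) and the diagonal $\kappa$-limit so that the inductive hypothesis applies uniformly; Remark \ref{uniform_intersection} should take care of the necessary uniformity.
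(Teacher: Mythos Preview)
Your approach shares the key ingredients with the paper's proof (Corollary~\ref{cell_fiber_cutting}, Proposition~\ref{frontier_limit}, Proposition~\ref{limit_dim}), but it is considerably more involved, and the part you yourself flag as the ``main obstacle'' --- handling $\cl(X)\setminus X$ --- is both unclear as written (the symbol $\fr\tilde K$ has no meaning, since $\tilde K$ is a compact limit set, not an integral manifold) and entirely avoidable.

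The paper's proof dispenses with both the induction on $\dim d$ and the separate treatment of $\cl(X)\setminus X$ via two simplifications.  First, since the sequence $(\Pi_m(K_\kappa))_\kappa$ is increasing with union $X$, one has directly
\[
  \bd(X)\ \subseteq\ \lim_\kappa \bd\big(\Pi_m(K_\kappa)\big)
\]
after passing to a subsequence for which the right-hand limit exists; this single inclusion covers $X\setminus\ir(X)$ and $\cl(X)\setminus X$ simultaneously, so your decomposition $\bd(X)\subseteq\bigcup_\kappa\bd(\Pi_m(K_\kappa))\cup(\cl(X)\setminus X)$ and the vague ``escape forces points into $\fr\tilde K$'' argument become unnecessary.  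Second, rather than keeping the cells $N\in\P$ from Corollary~\ref{cell_fiber_cutting} and carrying around the extra term $\Pi_m(\cl V_{\kappa,\iota}\cap\fr N)$ (whose dimension drop you do not actually establish --- Proposition~\ref{intersection_of_NPL} produces $T^\infty$-limits from $\mathbf{d}^{\mathbf{N}}$, with no reduction in $\dim\mathbf{d}$), the paper invokes \cite[Remark~4.2]{Lion:2009cf} to \emph{replace $M$ itself}, so that $\Pi_m\rest{d_k}$ has constant rank $r\le m$ globally; in particular $\dim d\le m$.  With these two reductions in place, the proof is a direct two-case split with no induction: if $r<m$, each $\Pi_m(K_\kappa)$ already has empty interior, so $\bd(\Pi_m(K_\kappa))=\Pi_m(K_\kappa)$ and a diagonal $T^\infty$-limit finishes via Propositions~\ref{limit_dim} and~\ref{definable_sets}(1); if $r=m$, openness of $\Pi_m(V_{\kappa,\iota})$ gives $\bd(\Pi_m(K_\kappa))\subseteq\Pi_m(\lim_\iota\fr V_{\kappa,\iota})$, and a single application of Propositions~\ref{frontier_limit} and~\ref{limit_dim} (not an inductive one) yields a closed $T^\infty$-set of dimension $<m$.
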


\begin{proof}
  Let the data associated to $X$ be given as in Definition
  \ref{lambda-set} and write $d = (d_0, \dots, d_k)$.  Note
  that $$\bd(X) \subseteq \lim_\kappa \bd(\Pi_m(K_\kappa)).$$ Fix an
  arbitrary $\kappa$; since $\Pi_m(K_\kappa) = \lim_\iota
  \Pi_m(V_{\kappa,\iota})$ we may assume, by Corollary
  \ref{cell_fiber_cutting}, \cite[Remark 4.2]{Lion:2009cf} and after
  replacing $M$ if necessary, that $\Pi_k\rest{d_k}$ is an immersion
  and has constant rank $r \le m$; in particular,
  $\dim(V_{\kappa,\iota}) \le m$.  If $r < m$, then each
  $\Pi_m(K_\kappa)$ has empty interior by Proposition
  \ref{limit_dimension}, so $$\lim_\kappa \bd(\Pi_m(K_\kappa)) =
  \lim_\kappa \Pi_m(K_\kappa) = \Pi_m (\lim_\kappa K_\kappa) =
  \Pi_m(\lim_\kappa V_{\kappa,\iota(\kappa)})$$ for some subsequence
  $(\iota(\kappa))$, and we conclude by Propositions \ref{limit_dim}
  and \ref{definable_sets}(1) in this case.  So assume that $r = m$;
  in particular, $\Pi_m(V_{\kappa,\iota})$ is open for every $\kappa$
  and $\iota$.  In this case, since $M$ is bounded, we have
  $\bd(\Pi_m(K_\kappa)) \subseteq \Pi_m(\lim_\iota \fr
  V_{\kappa,\iota})$ for each $\kappa$.  Hence $$\lim_\kappa
  \bd(\Pi_m(K_\kappa)) \subseteq \Pi_m(\lim_\kappa \lim_\iota \fr
  V_{\kappa,\iota}) = \Pi_m(\lim_\kappa \fr
  V_{\kappa,\iota(\kappa)})$$ for some subsequence $(\iota(\kappa))$.
  Now use Propositions \ref{frontier_limit} and
  \ref{definable_sets}(1).
\end{proof}

Following \cite{Wilkie:1999wi} and \cite{Lion:1998ay}, and proceeding
exactly as in \cite[Corollary 3.11 and Proposition
3.12]{Speissegger:1999nt} using the previous propositions, we
obtain:

\begin{prop}
  \label{complement}
  \begin{enumerate}
  \item Let $X \subseteq \RR^m$ be a $T^\infty$-set, and let $\,1 \leq
    l \leq m$.  Then the set $B:= \set{a \in \RR^l:\ \cl(X_a) \neq
      \cl(X)_a}$ has empty interior.
  \item Let $X \subseteq [-1,1]^m$ be a $T^\infty$-set.  Then $[-1,1]^m
    \setminus X$ is also a $T^\infty$-set.  \qed
  \end{enumerate}
\end{prop}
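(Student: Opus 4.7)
The plan is to adapt verbatim the proofs of Corollary 3.11 and Proposition 3.12 in \cite{Speissegger:1999nt}, substituting the $T^\infty$-set apparatus developed above for the nested pfaffian apparatus used there. The essential ingredients are all in place: the uniform component bound for fibres (Proposition \ref{finite_comp}), the containment of $\bd(X)$ in a closed $T^\infty$-set of empty interior (Proposition \ref{boundary}), the uniform intersection operation (Remark \ref{uniform_intersection}), and the Boolean and topological closure properties short of complementation (Proposition \ref{closure_properties}).

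For part (1), I would argue by contradiction. Suppose $B$ contains an open ball $U \subseteq \RR^l$. For each $a \in U$ pick $b(a) \in \cl(X)_a \setminus \cl(X_a)$; then $(a,b(a)) \in \cl(X) \setminus X \subseteq \bd(X)$, and by Proposition \ref{boundary} there is a closed $T^\infty$-set $E$ of empty interior containing every such point. Using Proposition \ref{finite_comp} on the fibres $X_a$, one extracts a uniform $\delta > 0$ and a smaller open ball $U' \subseteq U$ on which the closed ball of radius $\delta$ around $b(a)$ is disjoint from $X_a$ for every $a \in U'$. Applying Remark \ref{uniform_intersection} to the corresponding definable family of balls then produces a single $T^\infty$-set lying inside $E$ whose projection to $\RR^l$ contains $U'$, contradicting the fact that $E$ has empty interior.

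For part (2), I would proceed by induction on $m$, the case $m = 1$ being immediate from Proposition \ref{finite_comp} together with Proposition \ref{definable_sets}(2): a $T^\infty$-set in $[-1,1]$ has finitely many components, each a definable interval. For the inductive step, decompose $[-1,1]^m \setminus X = \bigl([-1,1]^m \setminus \cl(X)\bigr) \cup \bigl(\cl(X) \setminus X\bigr)$. The second piece lies in the closed $T^\infty$-set $E \supseteq \bd(X)$ of empty interior from Proposition \ref{boundary}, a lower-dimensional $T^\infty$-set to which the inductive hypothesis applies after suitable cutting. The first piece is handled via part (1): for $a$ outside a nowhere-dense set $B' \subseteq [-1,1]^{m-1}$, the fibre $\bigl([-1,1]^m \setminus \cl(X)\bigr)_a$ equals $[-1,1] \setminus \cl(X_a)$, which is a $T^\infty$-set by the base case; the exceptional part over $B'$ is again absorbed into a lower-dimensional $T^\infty$-set and treated by the inductive hypothesis.

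The main obstacle lies in part (2): the pieces of the complement obtained fibrewise must be glued into a single $T^\infty$-set, which requires systematic use of Remark \ref{uniform_intersection} so that the intersections and projections involved vary in definable families. This is precisely the technical bookkeeping carried out in the proof of Proposition 3.12 of \cite{Speissegger:1999nt}; the passage from nested pfaffian sets to $T^\infty$-sets goes through with no additional difficulty.
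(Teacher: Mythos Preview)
Your proposal is correct and takes essentially the same approach as the paper: the paper's proof consists solely of the instruction to proceed exactly as in \cite[Corollary 3.11 and Proposition 3.12]{Speissegger:1999nt}, using Propositions \ref{finite_comp}, \ref{closure_properties}, \ref{boundary} and Remark \ref{uniform_intersection} in place of their nested pfaffian analogues, which is precisely what you describe. Your sketch in fact supplies more detail than the paper does; the minor imprecisions (e.g.\ the exact mechanism by which the uniform $\delta$ is extracted in part (1), and the fact that the induction in part (2) is on the ambient dimension $m$ rather than on $\dim E$) are resolved by the cited arguments in \cite{Speissegger:1999nt}.
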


For $m \in \NN$, let $\T_m$ be the collection of all $T^\infty$-sets $X
\subseteq I^m$.

\begin{cor} 
  \label{tinfomin}
  The collection $\T:= (\T_m)_{m \in \NN}$ forms an o-minimal
  structure on $I$.  \qed
\end{cor}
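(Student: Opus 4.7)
The plan is to verify the axioms of an o-minimal structure on $I$ one by one, matching each axiom to a previously established proposition, and then to read off o-minimality from the uniform component bound. Since the corollary is stated with a bare \qed, the expectation is that essentially no new work is required beyond collecting what has already been proved.

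First I would check the Boolean and geometric closure axioms for the family $\T = (\T_m)$. Closure of $\T_m$ under finite unions, finite intersections, permutations of coordinates, cartesian products with $I$, and coordinate projections $I^{m+1} \to I^m$ all follow immediately from Proposition \ref{closure_properties}, once one observes that these operations preserve the property of being contained in $I^m$. Closure under complementation inside $[-1,1]^m$ is exactly the content of Proposition \ref{complement}(2). To verify that $\T$ extends the semialgebraic structure on $I$ (so in particular contains the diagonals $\{x_i = x_j\}$, the order relation restricted to $I^2$, and graphs of the ring operations restricted to $I^m$), I would invoke Proposition \ref{definable_sets}(2): every bounded definable subset of $\RR^m$ is a $T^\infty$-set, and semialgebraic subsets of $I^m$ are in particular definable in $\R$ and bounded.

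For the o-minimality axiom, I would take an arbitrary $X \in \T_1$, i.e.\ a $T^\infty$-set contained in $I$. By Proposition \ref{finite_comp} applied with $l = 0$, the set $X$ has at most $N$ connected components for some fixed $N \in \NN$. Every connected subset of $\RR$ is an interval (possibly a singleton or empty), so $X$ is a finite union of points and intervals, which is exactly the defining condition of o-minimality for the unary definable sets.

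The main obstacle is essentially absent; the only thing to watch is that the closure operations stay inside the sub-family $\T_m \subseteq T^\infty_m$ of subsets of $I^m$, rather than escaping into the larger ambient family $T^\infty_m$ of subsets of $\RR^m$. This is automatic for unions, intersections, permutations, and projections, it is the explicit content of Proposition \ref{complement}(2) for complements, and for cartesian products one uses that $I \times I \subseteq I^2$. Once these bookkeeping checks are made, the corollary falls out immediately from Propositions \ref{closure_properties}, \ref{definable_sets}, \ref{complement}, and \ref{finite_comp}.
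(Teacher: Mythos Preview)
Your proposal is correct and follows exactly the approach the paper intends: the corollary is marked with a bare \qed because it is the standard Wilkie-style assembly of the previously established closure properties, complement theorem, and uniform component bound, as indicated by the reference to \cite[Corollary 3.11 and Proposition 3.12]{Speissegger:1999nt} just before Proposition~\ref{complement}. Your bookkeeping checks that the operations stay inside $I^m$ are exactly the routine verifications the paper suppresses.
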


\begin{proof}[Proof of Theorem A]
  For each $m$, let $\tau_m: \RR^m
  \into (-1,1)^m$ be the (definable) homeomorphism given by
  $$\tau_m(x_1, \dots, x_m):= \left(\frac {x_1}{1+x_1^2}, \dots, \frac
    {x_m}{1+x_m^2}\right),$$ and let $\S_m$ be the collection of sets
  $\tau_m^{-1}(X)$ with $X \in \T_m$.  By Corollary \ref{tinfomin},
  the collection $\S = \S:= (\S_m)_m$ gives rise to an o-minimal
  expansion $T^\infty(\R)$ of $\R$.  By Proposition
  \ref{definable_sets}(2), every definable set is definable in
  $T^\infty(\R)$.  But if $L$ is a Rolle leaf of a definable nested
  distribution $d$ on $\RR^n$, then $\tau_n(L)$ is a Rolle leaf of the
  pullback $(\tau_n^{-1})^*d$.  It follows from Proposition
  \ref{definable_sets}(3) that $\tau_n(L) \in \T_n$, so $L$ is
  definable in $T^\infty(\R)$.  Therefore, $\N(\R)$ is a reduct of
  $T^\infty(\R)$ in the sense of definability.
\end{proof}

\section{Proof  of Theorem B}  \label{theorem_B}

First, we establish \cite[Proposition 7.1]{Lion:2009cf} with
``$T^\infty$-limit'' and ``$T^\infty(\R)$'' in place of ``pfaffian
limit'' and ``$\P(\R)$''.  To do so, we proceed exactly as in
\cite{Lion:2009cf}, making the following additional changes.
\begin{itemize}
\item[(B1)] Replacing ``admissible sequence'' with ``$T^\infty$-sequence'',
  we obtain corresponding versions of Lemma 4.8, Remark 4.9,
  Proposition 4.11, Corollary 4.13 and Proposition 5.3 in
  \cite{Lion:2009cf}.
\item[(B2)] Using (B1), we obtain the corresponding version
  of \cite[Proposition 7.1]{Lion:2009cf}.
\end{itemize}

Second, assuming that $\R$ admits analytic cell decomposition, (B2)
and \cite[Proposition 10.4]{Lion:2009cf} imply that every
$T^\infty$-limit over $\R$ is definable in $\N(\R)$; in particular,
$T^\infty(\R)$ and $\N(\R)$ are interdefinable.  Hence, by
\cite[Corollary 1]{Lion:2009cf}, $T^\infty(\R)$ and
$\P(\R)$ are interdefinable.  Replacing once more $\R$ by $\P(\R)$,
Theorem B is now proved.

\end{document}